\theoremstyle{plain}  
\newtheorem{thm}{Theorem}[section]
\newtheorem{cor}[thm]{Corollary}
\newtheorem{lem}[thm]{Lemma}
\newtheorem{prop}[thm]{Proposition}
\theoremstyle{definition}
\newtheorem{df}[thm]{Definition}
\theoremstyle{remark}
\newtheorem*{claim}{Claim}
\DeclareMathOperator{\id}{id}
\DeclareMathOperator{\rinf}{\rightarrowtail}
\DeclareMathOperator{\rdef}{\twoheadrightarrow}
\renewcommand{\Im}{\operatorname{Im}}
\newcommand{\Cyl}{\operatorname{Cyl}}
\newcommand{\Cone}{\operatorname{Cone}}
\newcommand{\Ar}{\operatorname{Ar}}
\newcommand{\Ob}{\operatorname{Ob}}
\newcommand{\dom}{\operatorname{dom}}
\newcommand{\ran}{\operatorname{ran}}
\newcommand{\op}{\operatorname{op}}
\newcommand{\Mor}{\operatorname{Mor}}
\newcommand{\qis}{\mathrm{qis}}
\newcommand{\cf}{\textrm{cf.}\;}
\newcommand{\ul}[1]{\underline{#1}}
\newcommand{\onto}[1]{\stackrel{#1}{\to}}
\newcommand{\onot}[1]{\stackrel{#1}{\leftarrow}}
\newcommand{\epi}{\twoheadrightarrow}
\newcommand{\mono}{\rightarrowtail}
\newcommand{\Ac}{\operatorname{Ac}}
\newcommand{\bK}{\operatorname{\mathbb{K}}}
\newcommand{\bE}{\mathbf{E}}
\newcommand{\cC}{\mathcal{C}}
\newcommand{\cD}{\mathcal{D}}
\newcommand{\cE}{\mathcal{E}}
\newcommand{\cS}{\mathcal{S}}
\newcommand{\Ch}{\operatorname{Ch}}
\newcommand{\Chb}{\Ch^b}
\newcommand{\ChbE}{\Chb(\cE)}
\newcommand{\ChbZ}{\Chb(\Z)}
\newcommand{\ChE}{\Ch(\cE)}
\newcommand{\Chsh}{\Ch^{\#}}
\newcommand{\ChshE}{\Chsh(\cE)}
\newcommand{\cN}{\mathcal{N}}
\newcommand{\cT}{\mathcal{T}}
\newcommand{\DE}{\cD(\cE)}
\newcommand{\E}{\cE}
\newcommand{\Ebar}{\underline{\cE}}
\newcommand{\Ewbar}{\underline{\cE}^w}
\newcommand{\NCE}{\operatorname{\bf NC}(\cE)}
\newcommand{\NC}[1]{\mathrm{({\bf{NC\ #1}})}}
\newcommand{\Nbar}{\underline{\cN}}
\newcommand{\TE}{\cT(\bE)}
\newcommand{\Z}{\mathbb{Z}}
\newcommand{\WEE}{\operatorname{\bf WE}(\cE)}
\renewcommand{\coprod}{\sqcup}
\newcommand{\isomto}{\onto{\simeq}}
\newcommand{\WE}[1]{\mathrm{({\bf{WE\ #1}})}}
\newcommand{\WEp}[1]{\mathrm{({\bf{WE\ #1}})'}}
\newcommand{\WEop}[1]{\mathrm{({\bf{WE\ #1}})^{\op}}}
\newcommand{\WEpop}[1]{\mathrm{{({\bf{WE\ #1}})'}^{\op}}}
\def\sn{\smallskip\noindent}
\def\bn{\bigskip\noindent}
\title{Quasi-weak equivalences in complicial exact categories}
\author{Toshiro Hiranouchi and Satoshi Mochizuki}
\begin{document}
\maketitle
\begin{abstract}
We introduce a notion of quasi-weak equivalences 
associated with weak-equivalences 
in an exact category. 
It gives us a delooping for (idempotent complete) exact categories 
and a condition that  
the negative $K$-group of an exact category becomes trivial. 
\end{abstract}

\section{Introduction}
The negative $K$-theory $\bK(\cE)$ for an exact category $\cE$ 
is introduced in \cite{Sch04} and \cite{Sch06} 
by M.\ Schlichting. 
This generalizes 
the definition of Bass, Karoubi, Pedersen-Weibel, Thomason, Carter and Yao. 
The first motivation of our work 
is to investigate some vanishing conjectures 
of such negative $K$-groups: 

\sn
(a) For any noetherian scheme $X$ of Krull dimension $d$, 
$K_{-n}(X)$ is trivial for $n>d$ (\cite{Wei80}). 

\sn 
(b) 
The negative $K$-groups of a small abelian category is trivial (\cite{Sch06}). 

\sn 
(c) 
For a finitely presented group $G$, $K_{-n}(\Z G) = 0$ for $n>1$ 
(\cite{Hsi84}). 

In \cite{Sch06}, 
it was given 
a description of $\bK_{-1}(\cE)$ 
and 
a condition on vanishing of $\bK_{-1}(\cE)$ 
for an (essentially small) exact category $\cE$ 
in terms of its {\it unbounded} derived category $\cD(\cE)$: 
We have 
$\bK_{-1}(\cE)=\bK_0(\cD(\cE))$ 
and $\bK_{-1}(\cE)$ is trivial 
if and only if $\cD(\cE)$ is idempotent complete 
(= Karoubian in the sense of \cite{TT90}, A.6.1). 
To extend this observation, we shall introduce 
the notion of {\it{higher derived categories}} $\cD_n(\cE)$ 
and 
show the following theorem:

\begin{thm}[Cor.\ \ref{cor:obst}]
For an idempotent complete exact category $\cE$, 
we have 
$\bK_{-n}(\cE)= \bK_0(\cD_n(\cE))$. 
Moreover, $\bK_{-n}(\cE)$ is trivial if and only if 
$\cD_n(\cE)$ is idempotent complete. 
\end{thm}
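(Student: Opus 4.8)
\emph{Proof sketch.} The strategy is to deduce the statement, by an iterated delooping, from the case $n=1$ recorded by Schlichting and recalled above (namely $\bK_{-1}(\cE)=\bK_0(\cD(\cE))$, with $\bK_{-1}(\cE)=0$ if and only if $\cD(\cE)$ is idempotent complete). The first task is therefore to set up a single delooping step and then to iterate it $n-1$ times before invoking the $n=1$ result.

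\textbf{Step 1 (one delooping via quasi-weak equivalences).} For a complicial exact category $(\cE,w)$ I would use the quasi-weak equivalences $\mathrm{qw}\supseteq w$ of this paper to produce two auxiliary exact categories: a \emph{flasque} one $\mathfrak{F}\cE$ admitting an Eilenberg swindle, so that $\bK(\mathfrak{F}\cE)\simeq\ast$, and a \emph{suspension} $\mathfrak{S}\cE$, fitting into a homotopy fibration of nonconnective $K$-theory spectra
\[
  \bK(\cE)\longrightarrow\bK(\mathfrak{F}\cE)\longrightarrow\bK(\mathfrak{S}\cE).
\]
Concretely, $\mathfrak{F}\cE$ should be a complicial model for ``$\cE$ with the quasi-weak equivalences formally inverted'' and $\mathfrak{S}\cE$ the corresponding Waldhausen quotient, the fibration being an instance of the localization/fibration theorem for nonconnective $K$-theory applied to the pair $w\subseteq\mathrm{qw}$. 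Contractibility of the middle term then gives $\bK_i(\cE)\cong\bK_{i+1}(\mathfrak{S}\cE)$ for all $i\in\Z$, hence after $n-1$ iterations
\[
  \bK_{-n}(\cE)\;\cong\;\bK_{-1}\bigl(\mathfrak{S}^{\,n-1}\cE\bigr).
\]
The higher derived category is then (defined so as to be) $\cD_n(\cE)\simeq\cD(\mathfrak{S}^{\,n-1}\cE)$, which reduces to Schlichting's $\cD(\cE)$ for $n=1$ and in general satisfies $\cD_n(\cE)\simeq\cD_{n-1}(\mathfrak{S}\cE)$.

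\textbf{Step 2 (reduction to $n=1$).} One checks that the iterated suspension $\cE':=\mathfrak{S}^{\,n-1}\cE$ still lies in the class of (essentially small, idempotent complete) exact categories for which the $n=1$ theorem is available; this is where the idempotent-completeness hypothesis on $\cE$ is used, possibly together with the invariance of nonconnective $K$-theory under idempotent completion. Applying that theorem to $\cE'$ gives $\bK_{-1}(\cE')=\bK_0(\cD(\cE'))$ and $\bK_{-1}(\cE')=0$ if and only if $\cD(\cE')$ is idempotent complete. Substituting the identifications $\bK_{-1}(\cE')\cong\bK_{-n}(\cE)$ and $\cD(\cE')\simeq\cD_n(\cE)$ from Step~1 yields both assertions simultaneously.

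\textbf{Main obstacle.} The genuine work is Step~1: building $\mathfrak{F}\cE$ and $\mathfrak{S}\cE$ inside the framework of complicial exact categories using quasi-weak equivalences, verifying flasqueness and the nonconnective fibration sequence, and doing so with enough uniformity that the construction can be iterated and that iterated suspensions remain (idempotent complete) exact categories to which the $n=1$ result applies; identifying $\cD(\mathfrak{S}^{\,n-1}\cE)$ with the intrinsically defined $\cD_n(\cE)$ is part of the same package. A secondary but essential subtlety, already present in Schlichting's $n=1$ argument and reused here, is the compatibility of the localization sequence of exact categories with passage to derived categories, which is precisely what promotes the equality $\bK_{-n}(\cE)=\bK_0(\cD_n(\cE))$ to the vanishing criterion phrased in terms of idempotent completeness of $\cD_n(\cE)$.
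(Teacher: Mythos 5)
Your Step 1 is, in substance, the paper's own construction: the flasque category is realized concretely as $\Ch^{+}(\bE)$ (or $\Ch^{-}(\bE)$) with quasi-weak equivalences, killed by an Eilenberg swindle, and the suspension is $\Ch(\bE)=(\Ch(\cE),qw)$; the delooping $\bK_i(\bE)\cong\bK_{i+1}(\Ch(\bE))$ comes from the Gillet--Waldhausen equivalence $\bK(\bE)\isomto\bK(\Ch^b(\bE))$ combined with a square of fully faithful inclusions $\cD^b(\bE),\cD^{\pm}(\bE),\cD(\bE)$ inducing equivalences on quotients (two interlocking localization sequences rather than one fibration, but that difference is cosmetic). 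So the ``main obstacle'' you isolate is exactly Theorem \ref{partial GW}, and your identification $\cD_n(\cE)\simeq\cD(\mathfrak{S}^{\,n-1}\cE)$ is the paper's definition of the higher derived category.

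The genuine gap is in Step 2. The iterated suspension $\cE'=\Ch_{n-1}(\bE)$ is not an exact category taken with quasi-isomorphisms: it carries the nontrivial class $qw$ of quasi-weak equivalences relative to the previous stage, and its derived category is the Verdier quotient by the $qw$-acyclics, not the ordinary unbounded derived category. Schlichting's $n=1$ theorem, which you invoke as a black box, is stated for the latter situation and does not apply to $\cE'$; what you need is its generalization to complicial exact categories with weak equivalences, and proving that generalization is essentially the theorem you are trying to prove, so the reduction is circular as written. The paper sidesteps this by iterating the delooping all the way to degree $0$, where $\bK_{-n}(\bE)\cong\bK_0(\Ch_n(\bE))=K_0(\cD_n(\bE)^{\sim})$ holds by the definition of the nonconnective spectrum, and then proving the vanishing criterion directly for an arbitrary complicial $(\cE,w)$ in Proposition \ref{cor:vanish}: the ``vanishing implies idempotent complete'' direction uses Thomason's classification of dense triangulated subcategories together with the injectivity of $K_0(\cT)\to K_0(\cT^{\sim})$, and the converse uses the surjection $K_0(\cD^{+}(\bE))\oplus K_0(\cD^{-}(\bE))\to K_0(\cD(\bE))$ to show that $K_0(\cD(\bE))=0$ already before idempotent completion. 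Neither of these two inputs appears in your sketch, and they carry the entire content of the second assertion; your appeal to ``invariance of nonconnective $K$-theory under idempotent completion'' does not substitute for them.
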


Although we limit our consideration to idempotent complete 
exact categories 
to avoid some technical difficulties, 
the exact categories in the conjectures (a)-(c) above 
satisfies this condition. 
Recall that the derived category $\DE$ 
is the triangulated category obtained by 
formally inverting quasi-isomorphisms in 
the category of chain complexes $\ChE$. 
The pair $(\ChE, \qis)$ 
of the category of chain complexes $\ChE$ 
and $\qis$ the class of quasi-isomorphisms 
forms a {\it complicial} exact category with weak equivalences 
(\cf Def.\ \ref{df:complicial exact category}). 
More generally, 
for a complicial exact category  
with weak equivalences
$\bE=(\cE,w)$ (\cf Def.\ \ref{df:complicial exact category with we}), 
we define a class of weak equivalences $qw$ 
in the category of chain complexes 
$\ChE$,  
which is called {\it{quasi-weak equivalences}} 
associated with $w$. 
If $w$ is the class of isomorphisms in $\cE$, 
then $qw$ is just the class of quasi-isomorphisms on $\ChE$. 
The derived category $\cD(\bE)$ of $\bE$ is obtained by 
formally inverting the quasi-weak equivalences 
in $\Ch(\cE)$. 
Put $\Ch(\bE)=(\ChE,qw)$ and 
one can define the class weak equivalences 
in $\Ch_n(\bE) := \Ch(\Ch_{n-1}(\bE))$ inductively. 
The $n$-th derived category $\cD_n(\bE)$ associated with $\bE$,  
is the derived category of $\Ch_n(\bE)$. 
We also obtain the following theorems 
on the negative $K$-theory $\bK(\bE)$ for $\bE$ (for definition, see \cite{Sch09}): 

\begin{thm}[Thm.\ \ref{partial GW}]
Assume that $\cE$ is idempotent complete. 
Then we have:

\sn
$\mathrm{(i)}$ {\rm (Gillet-Waldhausen theorem)} 
$\bK(\bE)\isomto\bK(\Ch^b(\bE))$, 

\sn
$\mathrm{(ii)}$ {\rm (Eilenberg swindle)} 
$\bK(\Ch^{+}(\bE))\isomto\bK(\Ch^{-}(\bE))\isomto 0$, 

\sn
$\mathrm{(iii)}$ {\rm (Delooping)} 
$\bK(\Ch(\bE))\isomto\Sigma\bK(\bE)$, 
where $\Sigma$ is a suspension functor 
on the stable category of spectra.
\end{thm}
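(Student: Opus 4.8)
The overall plan is to deduce all three statements from a single localization (fibration) theorem for the non-connective $K$-theory of complicial exact categories with weak equivalences --- applied to the inclusion $\Ch^{b}(\bE)\hookrightarrow\Ch^{+}(\bE)$ --- together with the Gillet--Waldhausen theorem in non-negative degrees and an Eilenberg swindle on the one-sidedly bounded complexes. Idempotent completeness of $\cE$ is a standing hypothesis, used to keep $\Ch^{b}(\bE)$ and the truncation categories below inside the range of validity of the machinery of \cite{Sch09}, and to make the relevant equivalences of bounded derived categories hold.

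I would begin with (ii). For $X\in\Ch^{+}(\cE)$ the complex $\tau(X):=\bigoplus_{n\ge 0}X[2n]$ is again bounded below, and --- the one place where one-sided boundedness is essential --- in each fixed degree only finitely many of the shifts $X[2n]$ are non-zero, so $\tau$ uses only the finite coproducts present in any exact category. One checks that $\tau$ is a complicial exact endofunctor preserving quasi-weak equivalences, with a natural isomorphism $\tau\cong\id\oplus\bigl([2]\circ\tau\bigr)$; additivity then gives $\tau_{*}=\id_{*}+[2]_{*}\circ\tau_{*}$ on $\bK(\Ch^{+}(\bE))$. The mapping-cone conflation $X[-1]\rightarrowtail\Cone(\id_{X[-1]})\twoheadrightarrow X$, whose middle term is quasi-weakly contractible, forces the shift to act by $-\id$ on $\bK(\Ch^{+}(\bE))$, hence $[2]_{*}=\id_{*}$, whence $\tau_{*}=\id_{*}+\tau_{*}$ and $\bK(\Ch^{+}(\bE))\simeq 0$. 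The category $\Ch^{-}(\bE)$ is treated identically with $[2]$ replaced by $[-2]$.

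For (iii) I would apply the localization theorem to $\Ch^{b}(\bE)\hookrightarrow\Ch^{+}(\bE)$, the latter carrying (besides $qw$) the coarser weak-equivalence structure of maps inverted in the Verdier quotient $\cD^{+}(\cE)/\cD^{b}(\cE)$; this yields a homotopy fibration
\[
\bK(\Ch^{b}(\bE))\longrightarrow\bK(\Ch^{+}(\bE))\longrightarrow\bK\bigl(\Ch^{+}(\bE)/\Ch^{b}(\bE)\bigr).
\]
By (ii) the middle term is contractible, so $\bK(\Ch^{+}(\bE)/\Ch^{b}(\bE))\simeq\Sigma\bK(\Ch^{b}(\bE))$, and (i) below identifies $\bK(\Ch^{b}(\bE))$ with $\bK(\bE)$; it then remains to identify $\Ch^{+}(\bE)/\Ch^{b}(\bE)$ with $\Ch(\bE)$ at the level of non-connective $K$-theory, i.e. to show the idempotent completion of $\cD^{+}(\cE)/\cD^{b}(\cE)$ is derived-equivalent to $\cD(\bE)$ with quasi-weak equivalences corresponding, giving $\bK(\Ch(\bE))\simeq\Sigma\bK(\bE)$. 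As for (i): in non-negative degrees it is the Gillet--Waldhausen theorem (\cite{TT90}, 1.11.7) applied to the complicial biWaldhausen structure underlying $\bE$, and its extension to negative degrees is the non-connective Gillet--Waldhausen theorem, proved as in Schlichting's treatment of exact categories via the totalization equivalence $\cD^{b}(\Ch^{b}(\bE))\simeq\cD^{b}(\bE)$ together with the corresponding invariance of the $K$-theory of \cite{Sch09}.

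The main obstacle is the localization theorem in this generality together with the identification of the suspension quotient. One must check that $qw$ and the coarser structure above are bona fide complicial weak-equivalence structures to which \cite{Sch09} applies; that $\Ch^{b}(\bE)$ is a thick, $qw$-saturated subcategory of $\Ch^{+}(\bE)$, so that the fibration sequence exists at the non-connective level (this is precisely where dropping idempotent completeness would be fatal); and --- the most delicate point --- that after idempotent completion the Verdier quotient $\cD^{+}(\cE)/\cD^{b}(\cE)$ recovers the full derived category $\cD(\bE)$ with matching quasi-weak equivalences. I expect these compatibility checks for the class $qw$, rather than any of the spectrum-level formalities, to be the technical heart of the proof.
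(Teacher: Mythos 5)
Your overall architecture (Eilenberg swindle on one-sidedly bounded complexes, localization fibrations, Gillet--Waldhausen) matches the paper's, and your proof of (ii) is essentially the paper's argument with $F=\bigoplus_{n}[2n]$ and additivity. But step (iii) contains a genuine error at its key identification: the idempotent completion of $\cD^{+}(\bE)/\cD^{b}(\bE)$ is \emph{not} equivalent to $\cD(\bE)$. Every bounded complex becomes zero in $\cD^{+}(\bE)/\cD^{b}(\bE)$, whereas objects of $\cE$ placed in degree $0$ are generally nonzero in $\cD(\bE)$; in particular no equivalence compatible with the inclusion $\Ch^{+}(\cE)\subset\Ch(\cE)$ can exist, and idempotent completion cannot repair this. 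What is true, and what the paper proves, is $\cD^{+}(\bE)/\cD^{b}(\bE)\simeq\cD(\bE)/\cD^{-}(\bE)$: one first checks by truncation that the acyclics are compatible across boundedness conditions, e.g.\ $\Chb(\cE)^{qw}=\Chb(\cE)\cap\Ch^{+}(\cE)^{qw}$ --- this is exactly where idempotent completeness enters, via \cite{BS01}, Lem.~2.6 --- so that all four functors in the square $\cD^{b}(\bE)\to\cD^{\pm}(\bE)\to\cD(\bE)$ are fully faithful (Keller's criterion, Lemma \ref{lem:tri ker}) and induce equivalences on the quotients. One then needs a \emph{second} localization fibration, $\bK(\Ch^{-}(\bE))\to\bK(\Ch(\bE))\to\bK(\Ch(\bE)/\Ch^{-}(\bE))$, together with the vanishing of $\bK(\Ch^{-}(\bE))$ from (ii) --- which you proved but never used --- to conclude $\bK(\Ch(\bE))\simeq\bK(\Ch(\bE)/\Ch^{-}(\bE))\simeq\bK(\Ch^{+}(\bE)/\Ch^{b}(\bE))\simeq\Sigma\bK(\Chb(\bE))$.

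A smaller but real gap is in (i): since $w$ is an arbitrary class of weak equivalences, you cannot simply invoke the Gillet--Waldhausen theorem for $\bE=(\cE,w)$; that theorem is available for $(\cE,\isom)$ versus $(\Chb(\cE),\qis)$ and, likewise, for $\cE^{w}$ versus $(\Chb(\cE^{w}),\qis)$. The paper's proof compares the two fibration sequences $\bK(\cE^{w})\to\bK(\cE)\to\bK(\bE)$ and $\bK(\Ac_{w}^{b}(\cE),\qis)\to\bK(\Chb(\cE),\qis)\to\bK(\Chb(\bE))$ furnished by Schlichting's fibration theorem, applies classical Gillet--Waldhausen to the left and middle columns (plus a comparison of $\Chb(\cE^{w})$ with its $\qis$-closure $\Ac_{w}^{b}(\cE)$ via localization sequences), and deduces that the right-hand map is an equivalence. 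This reduction is the actual content of (i) and is missing from your outline.
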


The organization of this note is as follows: 
In Section\ \ref{sec:we}, we list 
several axioms about weak equivalences 
in a category with cofibrations 
and study their implication. 
In Section \ref{sec:complicial}, 
after recalling the definition of 
complicial exact category with weak equivalences, 
we consider the notion of {\it null classes} 
and investigate the relation to weak equivalences 
in a complicial exact category. 
In Section\ \ref{sec:deloop}, 
we introduce the quasi-weak equivalences 
as noted above 
which is a class of weak equivalences in 
the exact category of chain complexes $\ChE$   
associated with a given weak equivalences $w$  
in an exact category $\cE$.
By using this, 
we prove the main theorem. 
%
Throughout this note, 
we follow basically 
the terminologies on algebraic $K$-theory in \cite{TT90} and \cite{Sch09}.

\medskip\noindent
{\it Acknowledgments.} 
The second author is grateful 
for Marco Schlichting for stimulating discussion 
about a complicial Gabriel-Quillen embedding.  
The first author has been partially supported by 
JSPS KAKENHI \#21740015. 
This work has also been supported by
JSPS Core-to-Core Program \#18005 (Coordinator: Makoto Matsumoto).
The final part of this note was written during 
a stay of the first author at the Duisburg-Essen university. 
He wishes to thank the institute for its hospitality.

\section{Weak equivalences in categories with cofibrations}
\label{sec:we}
In this section, 
we list several axioms on weak equivalences 
in a category with cofibrations 
and study their implications.   
Let $\cC$ be a category with cofibrations 
and $w$ be a class of morphisms in $\cC$.  
We denote by $\Ar(\cC)$ the category of arrows $\cC \to \cC$. 
The functors $\dom, \ran:\Ar(\cC) \to \cC$ are defined by 
$\dom(f) = x$  and $\ran(f) = y$ respectively, for $f:x\to y \in \Ar(\cC)$.

First, we consider the following axioms on $w$: 

\sn
$\WE{1}$ Every isomorphisms in $\cC$ is in $w$.

\sn
$\WE{2}$ 
For composable morphisms $f$ and $g$ in $\cC$, 
if two of $f$, $g$ and $g f$ are in $w$, 
then the other one is also in $w$.

\sn
$\WE{3}$ 
For a commutative diagram in $\cC$,
\begin{equation}
{\label{diagram WE3}}
\vcenter{
\xymatrix{
x \ar@{>->}[r] \ar[d]_{a} & y \ar@{->>}[r] \ar[d]_{b} & z \ar[d]_{c}\\ 
x' \ar@{>->}[r] & y' \ar@{->>}[r] & z'& ,
}
}
\end{equation}
where the horizontal lines are cofibration sequences, 
if $a$ and $c$ are in $w$, 
then so is $b$.

\sn
$\WEp{3}$ For the commutative diagram (\ref{diagram WE3}) in $\cC$,
if $a$ and $b$ are in $w$, 
then so is $c$.

\sn
$\WE{4}$ 
For a commutative diagram in $\cC$, 
\begin{equation}
{\label{import cobase diagram}}
\vcenter{
\xymatrix{
y \ar[d]_{a} & x \ar[r]^{f} \ar@{>->}[l]_{i} \ar[d]_{b} & z \ar[d]_{c}\\ 
y' &x' \ar[r]_{f'} \ar@{>->}[l]^{i'} & z' &,
}}
\end{equation}
where $i$ and $i'$ are cofibrations, 
if $a$, $b$ and $c$ are in $w$, 
then the induced map  
$a \coprod_{b} c:y \coprod_x z \to y' \coprod_{x'} z'$ 
on pushouts is also cofibration.

\sn
$\WE{5}$ 
For any cofibration $x\mono y$ in  $\cC$  
and $x\to z$ in $w$, 
the induced morphism $y \to y\coprod_x z$ is in $w$.

\sn
$\WE6$ (Factorization axiom) There are 
a functor 
$\Cyl:\Ar\cC \to \cC$ 
and, 
natural transformations $\alpha:\dom \Rightarrow \Cyl$ 
and $\beta:\Cyl \Rightarrow \ran$ 
such that for any morphism $f:x \to y$ in $\cC$, 
$\alpha(f):x \to \Cyl(f)$ is a cofibration, 
$\beta(f):\Cyl(f) \to y$ is in $w$ and
$\beta(f)\alpha(f)=f$. 

\sn
$\WE7$ 
For a commutative diagram 
\begin{equation}
{\label{diagram WE7}}
\vcenter{
\xymatrix{
&x \ar[d]_{a}\ar[r]^i & y \ar[d]_{b} \\ 
&x'\ar[r]^{i'} & y' &, 
}
}
\end{equation}
with retractions $p:y \to x$ and $p':y\to x'$ such that 
$pi = \id_x$ and $p'i' = \id_{x'}$,  
if $b$ is in $w$, so is $a$.

\bn
Note that the axiom $\WE{2}$ implies that the class $w$  
is closed under composition. 
Next we study logical relations among these axioms 
as follows:

\begin{prop}[\cf \cite{Gun78}; \cite{GJ99}, 8.8]
{\label{we implications}}
$\mathrm{(i)}$ $\WE{1}$ and $\WE{3}$ imply $\WE{5}$.

\sn
$\mathrm{(ii)}$ $\WE{2}$, $\WE{5}$ and $\WE{6}$ 
imply $\WE{4}$. 

\sn
$\mathrm{(iii)}$ $\WE1$ and $\WE{4}$ imply $\WEp{3}$.
\end{prop}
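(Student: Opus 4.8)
The common device in all three parts is to realize the morphism whose membership in $w$ is at issue as an instance of $\WE{3}$ (for (i)), of $\WE{4}$ (for (iii)), or — after suitable reductions — of iterated cobase changes controlled by $\WE{5}$ (for (ii)); throughout one uses freely that in a category with cofibrations the cobase change of a cofibration $x \mono y$ along an arbitrary morphism $x \to z$ is a cofibration $z \mono y \coprod_x z$ with cokernel canonically $y/x$, and that a composite of cofibrations is a cofibration. For (i), given $x \mono y$ and $x \to z$ in $w$, I would form the morphism of cofibration sequences from $(x \mono y \epi y/x)$ to $(z \mono y \coprod_x z \epi y/x)$, the second row obtained by cobase change along $x \to z$, with left vertical $x \to z$, right vertical $\id_{y/x}$ and middle vertical the canonical map $y \to y \coprod_x z$; the left vertical is in $w$ by hypothesis and the right by $\WE{1}$, so $\WE{3}$ puts the middle one in $w$, which is $\WE{5}$. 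For (iii), given diagram (\ref{diagram WE3}) with the vertical maps $a$ (on $x$) and $b$ (on $y$) in $w$, I would apply $\WE{4}$ to the span $y \cofl x \to 0$ mapping to $y' \cofl x' \to 0$ with column maps $b$, $a$, $\id_0$ (the last in $w$ by $\WE{1}$, the cofibration hypothesis supplied by $x \mono y$, $x' \mono y'$); the induced map on pushouts then lies in $w$, but $y \coprod_x 0 = y/x$ identifies with $z$ and $y' \coprod_{x'} 0$ with $z'$, and under these identifications the induced map is forced by the universal property of cokernels to coincide with $c$, so $c \in w$.

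The substance is (ii), which I would establish by three reductions. \emph{Step 1: reduce to $f$, $f'$ cofibrations.} Using the cylinder functor of $\WE{6}$, factor $f = \beta(f)\alpha(f)$, $f' = \beta(f')\alpha(f')$ with the $\alpha$'s cofibrations and the $\beta$'s in $w$, and let $d \colon \Cyl(f) \to \Cyl(f')$ be induced by functoriality (so $\beta(f')d = c\beta(f)$); then $d \in w$ by two-out-of-three ($\WE{2}$). The maps $\beta(f)$, $\beta(f')$ induce on pushouts morphisms $y \coprod_x \Cyl(f) \to y \coprod_x z$ and $y' \coprod_{x'} \Cyl(f') \to y' \coprod_{x'} z'$ which are cobase changes of $\beta(f)$, $\beta(f')$ along cofibrations, hence in $w$ by $\WE{5}$; two-out-of-three applied to the resulting commutative square reduces $\WE{4}$ to the diagram $y \cofl x \xrightarrow{\alpha(f)} \Cyl(f)$, $y' \cofl x' \xrightarrow{\alpha(f')} \Cyl(f')$, in which the ``$f$-legs'' are cofibrations. \emph{Step 2: reduce further to $b = \id$.} With $i$, $i'$, $f$, $f'$ all cofibrations, factor $a \coprod_b c$ as $y \coprod_x z \to (y \coprod_x z) \coprod_x x' \to y' \coprod_{x'} z'$: the first arrow is the cobase change of $b$ along the cofibration $x \to y \coprod_x z$, hence in $w$ by $\WE{5}$; and via the canonical identification $(y \coprod_x z) \coprod_x x' \cong (y \coprod_x x') \coprod_{x'} (z \coprod_x x')$ the second arrow is a gluing map over $x'$ whose $x'$-column is the identity and whose other two columns $y \coprod_x x' \to y'$ and $z \coprod_x x' \to z'$ lie in $w$ (each is $a$, resp.\ $c$, precomposed with a cobase change of $b$ along a cofibration, the latter being in $w$ by $\WE{5}$, so $\WE{2}$ applies), whence that arrow is in $w$ by Step 3 and $a \coprod_b c \in w$ by $\WE{2}$. \emph{Step 3: the case $x = x'$, $b = \id$, all four legs cofibrations.} Factor $a \coprod c$ as $y \coprod_x z \to y' \coprod_x z \to y' \coprod_x z'$, the two arrows being the cobase changes of $a$ along the cofibration $y \to y \coprod_x z$ and of $c$ along the cofibration $z \to y' \coprod_x z$; both lie in $w$ by $\WE{5}$, and $\WE{2}$ concludes.

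The main obstacle is Step 1: it is the one place where the factorization axiom $\WE{6}$ is genuinely needed — there being no other route to replacing the non-cofibrations $f$, $b$ by cofibrations — and it demands some care to track which of the various induced maps on pushouts are cofibrations (so that $\WE{5}$ may be invoked) as opposed to already being in $w$. Past Step 1, the argument is a routine, if slightly lengthy, manipulation of iterated pushouts together with repeated use of $\WE{5}$ and two-out-of-three; note in particular that the proof of (ii) never uses $\WE{1}$ or $\WE{3}$, consistently with the statement.
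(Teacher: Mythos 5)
Your proof is correct and follows essentially the same route as the paper's: parts (i) and (iii) are the paper's arguments verbatim, and part (ii) uses the same two-stage strategy of first invoking the cylinder functor of $\WE{6}$ to reduce to the case where $f$, $f'$ are cofibrations and then expressing $a\coprod_b c$ as a composite of cobase changes handled by $\WE{5}$ and $\WE{2}$. The only difference is organizational: the paper isolates a single ``precogluing'' lemma covering both the all-$w$ and all-cofibration cases and chains the cobase changes in a slightly different order (pushing out along $a$ first, via a cube and one coCartesian identification), whereas you push out along $b$ first and then reduce to the case $b=\id$; both computations are equivalent.
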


\begin{proof}
$\mathrm{(i)}$ 
Let $i:x \mono y$ be a cofibration in $\cC$ 
and $a:x \to x'$ in $w$. 
It is easy to see that the sequence 
$x' \mono y':=x'\coprod_xy \epi x/y$ 
is a cofibration sequence. 
Now applying $\WE{3}$ to the following commutative diagram
$$
\xymatrix{
x \ar@{>->}[r]^{i} \ar[d]_a \ar@{}[rd] & y \ar@{->>}[r] \ar[d]_b & x/y \ar[d]_{\id_{x/y}}\\
x' \ar@{>->}[r] & y' \ar@{->>}[r] & x/y& ,
}
$$
we learn that $b:y \to y'$ is also in $w$.

\sn
$\mathrm{(iii)}$ 
Now we consider the following commutative diagram in $\cC$:
$$
\xymatrix{
x \ar@{>->}[r] \ar[d]_{a} & y \ar@{->>}[r] \ar[d]_{b} & z \ar[d]_{c}\\ 
x' \ar@{>->}[r] & y' \ar@{->>}[r] & z'& ,
}
$$
where the horizontal lines are cofibration sequences, 
and $a$ and $b$ are in $w$. 
Note that the map $0 \to 0$ is in $w$ by $\WE{1}$. 
For the commutative diagram (\ref{diagram WE3}) in $\cC$,
we assume that $a$ and $b$ are in $w$. 
The following diagrams are coCartesian:
$$
\xymatrix{
&x \ar[d]\ar@{>->}[r]\ar[r] & y \ar@{->>}[d]\\ 
&0\ar@{>->}[r] \ar[r] & z & ,
}
\quad
\xymatrix{
&x' \ar[d]\ar@{>->}[r]\ar[r] & y' \ar@{->>}[d]\\ 
&0\ar@{>->}[r] \ar[r] & z' &.
}
$$ 
By $\WE4$, the map $c = 0 \coprod_{a}b$  is in $w$. 

\sn
$\mathrm{(ii)}$ 
Consider the commutative diagram (\ref{import cobase diagram}) in $\WE4$.  
First we assume the lemma below 
we intend to conclude the result.

\begin{lem}
{\label{precogluing axiom}}
Let us assume the axioms $\WE2$ and $\WE5$. 
In the diagram $\mathrm{(\ref{import cobase diagram})}$, 
suppose that both $f$ and $f'$ are cofibrations or in $w$. 
Then, $a \coprod_b c: y \coprod_x z \to y' \coprod_{x'} z'$ is in $w$. 
\end{lem}

Applying $\WE6$ to the diagram (\ref{import cobase diagram}), 
we get the following diagrams

$$
\xymatrix{
y \ar[d]_{a} & x \ar@{>->}[r]^-{\alpha(f)} \ar@{>->}[l]_{i} \ar[d]_{b} & 
\Cyl(f) \ar[r]^-{\beta(f)} \ar[d]_d & z \ar[d]_{c}\\ 
y' & x' \ar@{>->}[r]_-{\alpha(f')} \ar@{>->}[l]^{i'} & 
\Cyl(f') \ar[r]_-{\beta(f')} & z'& ,
}
$$
$$
\xymatrix{
y \coprod_x \Cyl(f) \ar[d]_{e} & 
\Cyl(f) \ar[r]^{\ \ \beta(f)} \ar[d]_d \ar@{>->}[l] & z \ar[d]_{c}\\ 
y' \coprod_{x'} \Cyl(f') &  \Cyl(f') \ar[r]_{\ \ \beta(f')} \ar@{>->}[l] & z'& .}
$$
By Lemma\ \ref{precogluing axiom}, 
we learn that $e$ is in $w$.
Again using Lemma\ \ref{precogluing axiom}, 
we learn that $e \coprod_d c$ is in $w$. 
Thus by Lemma \ref{coCartesian lemma}, (i) below, 
we notice that $e \coprod_d c$ is $a \coprod_b c$. 
\end{proof}

\begin{lem}
{\label{coCartesian lemma}}
Let us consider a commutative diagram below in a category.
$$
\xymatrix{
\bullet \ar[r] \ar[d] \ar@{}[rd]|{\mathrm{I}} & 
\bullet \ar[r] \ar[d] \ar@{}[rd]|{\mathrm{II}} &
\bullet \ar[d]\\
\bullet \ar[r] & 
\bullet \ar[r] & 
\bullet  & .
}$$

\sn
$\mathrm{(i)}$ If the diagrams $\mathrm{I}$ and 
$\mathrm{II}$ are coCartesian squares, 
then the diagram $\mathrm{I+II}$ is also. 

\sn
$\mathrm{(ii)}$ If the diagram $\mathrm{I+II}$ and 
$\mathrm{I}$ are coCartesian squares, 
then the diagram $\mathrm{II}$ is also.
\end{lem}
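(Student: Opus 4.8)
This is the standard "pasting lemma for pushouts" (the dual of the pasting lemma for pullbacks), so the plan is to verify the universal properties directly; no ideas beyond diagram-chasing are needed. I will label the objects of the diagram as
\[
\xymatrix{
a \ar[r]^{u} \ar[d]_{p} \ar@{}[rd]|{\mathrm{I}} & b \ar[r]^{v} \ar[d]_{q} \ar@{}[rd]|{\mathrm{II}} & c \ar[d]^{r}\\
a' \ar[r]_{u'} & b' \ar[r]_{v'} & c' & ,
}
\]
so that square $\mathrm{I}$ has corners $a,b,a',b'$, square $\mathrm{II}$ has corners $b,c,b',c'$, and $\mathrm{I+II}$ has corners $a,c,a',c'$ with top edge $vu$ and bottom edge $v'u'$.

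For part $\mathrm{(i)}$ I would check the universal property of the pushout of $vu\colon a\to c$ and $p\colon a\to a'$. Suppose given an object $t$ together with morphisms $s\colon c\to t$ and $s'\colon a'\to t$ satisfying $s(vu)=s'p$. Since $\mathrm{II}$ is coCartesian and $(sv)\colon b\to t$, one needs a map $b'\to t$; but to invoke the universal property of $\mathrm{II}$ one first produces a map out of $b$ and a map out of $b'$ agreeing on $a\to b\to\dots$ — more precisely, apply the universal property of $\mathrm{I}$ to the pair $(sv\colon b\to t,\ s'\colon a'\to t)$, which agree on $a$ because $(sv)u = s(vu) = s'p$, to obtain a unique $w\colon b'\to t$ with $wq = sv$ and $wu' = s'$. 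Now $w$ and $s$ form a cocone on $\mathrm{II}$ (they agree on $b$ since $wq=sv$), so the universal property of $\mathrm{II}$ gives a unique $z\colon c'\to t$ with $zv' = w$ and $zr = s$. Then $zr=s$ and $z(v'u') = wu' = s'$, so $z$ is a morphism with the required two properties; uniqueness of $z$ follows by running the two universal properties in sequence (any such $z$ forces $w:=zv'$, which is forced by $\mathrm{I}$, which then forces $z$ by $\mathrm{II}$). Hence $\mathrm{I+II}$ is coCartesian.

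For part $\mathrm{(ii)}$ I would assume $\mathrm{I}$ and $\mathrm{I+II}$ are coCartesian and deduce the same for $\mathrm{II}$. Given a cocone on $\mathrm{II}$, that is $s\colon c\to t$ and $w\colon b'\to t$ with $wq = sv$, I compose with $\mathrm{I}$: the pair $(wu'\colon a'\to t,\ s(vu)\colon a\to t)$ — note $(wu')p = w(qu)$... here one uses commutativity $u'p = qu$ of square $\mathrm{I}$, giving $wu'p = wqu = svu$, so this pair is a cocone on the span $(a\to a',\ a\to c)$ defining $\mathrm{I+II}$ — hence yields a unique $z\colon c'\to t$ with $zr = s$ and $z(v'u') = wu'$. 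It remains to see $zv' = w$: both $zv'$ and $w$ are maps $b'\to t$; precomposing with $u'$ gives $zv'u' = wu'$ (just shown) and precomposing with $q$ gives $zv'q = zrv = sv = wq$, so $zv'$ and $w$ agree after $u'$ and after $q$, whence $zv' = w$ by the uniqueness part of the universal property of $\mathrm{I}$ (applied to the cocone $(wu',\,sv)$ on $\mathrm{I}$). Uniqueness of $z$ as a map out of $\mathrm{II}$ reduces to uniqueness of $z$ as a map out of $\mathrm{I+II}$.

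The only mildly delicate point — the one I would be most careful about — is the bookkeeping in part $\mathrm{(ii)}$: one must use the universal property of the \emph{already-known} square $\mathrm{I}$ both to construct the comparison map and, separately, in its uniqueness form to identify $zv'$ with $w$; conflating these two uses is the easy way to write a circular argument. Everything else is a routine, if slightly lengthy, chase through the definitions, and I would present it compactly rather than spelling out every composite.
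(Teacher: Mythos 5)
Your argument is correct: both the construction of the comparison map and the two uniqueness checks are carried out properly, and in part (ii) you correctly isolate the delicate point, namely that $zv'=w$ must be deduced from the uniqueness clause of the universal property of square $\mathrm{I}$ rather than assumed. The paper itself states this pasting lemma for pushouts without proof, treating it as standard, so your universal-property chase is exactly the argument the authors are implicitly relying on.
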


\begin{proof}[Proof of Lemma \ref{precogluing axiom}]
First let us assume that 
both $f$ and $f'$ in the diagram (\ref{import cobase diagram}) 
are in $w$. 
Then in the diagram below
$$
\xymatrix{
y \ar[r]^-{g} \ar[d]_a & y \coprod_x z \ar[d]_{a \coprod_b c}\\
y' \ar[r]_-{g'} & y' \coprod_{x'} z' & ,
}$$
$g$ and $g'$ are in $w$ by $\WE5$. 
Therefore by $\WE2$, 
$a \coprod_b c$ is also. 
\sn
Next let us suppose that 
$f$ and $f'$ in the diagram (\ref{import cobase diagram}) 
are cofibrations in $\cC$. 
Consider the following diagram:
$$
\xymatrix{
  & x \ar@{>->}[rr] \ar@{>->}[ld] \ar[dd] && z \ar@{>->}[ld] \ar[d]\\
y \ar[dd] \ar@{>->}[rr] && y \coprod_x z \ar[d] 
  & x' \coprod_x z \ar[d] \ar@{>->}[ld]\\
& x' \ar@{>->}[ld] \ar@{>->}'[r][rr] \ar[rru] 
& y' \coprod_{y} y \coprod_x z  \ar[d] & z' \ar@{>->}[ld]\\
y' \ar@{>->}[rr] \ar@{>->}[rru] && y' \coprod_{x'} z' &.  
}
$$

\begin{claim}
The commutative diagram below is coCartesian.
$$
\xymatrix{
x' \coprod_x z \ar[r] \ar[d] & z' \ar@{>->}[d]\\
y' \coprod_y y \coprod_x z \ar[r] & y' \coprod_{x'} z' &.
}
$$
\end{claim}

\begin{proof}
Consider the following commutative diagram:
$$
\xymatrix{
x' \ar[r] \ar[d] \ar@{}[rd]|{\mathrm{I}} 
& x' \coprod_x z \ar[d] \ar[r] \ar@{}[rd]|{\mathrm{II}}
& z' \ar[d]\\
y' \ar[r] & y' \coprod_y y \coprod_x z \ar[r] & y' \coprod_{x'} z' & .
}
$$
The squares $\mathrm{I+II}$ and $\mathrm{I}$ are coCartesian. 
Therefore by Lemma\ \ref{coCartesian lemma} (ii), 
the diagram $\mathrm{II}$ is also coCartesian.
\end{proof}

By ({\bf{WE 5}}), 
$y \coprod_x z \to y' \coprod_y y \coprod_x z$ and $z \to x' \coprod_x z$ 
are in $w$. 
Therefore by ({\bf{WE 2}}), 
$x' \coprod_x z \to z'$ is in $w$. 
Hence by ({\bf{WE 5}}) again, 
$y' \coprod_y y \coprod_x z \to y' \coprod_{x'} z'$ is in $w$.
Finally by ({\bf{WE 2}}), 
the composition $a \coprod_b c:y \coprod_x z \to y' \coprod_y y \coprod_x z \to y' \coprod_{x'} z'$ is in $w$. 
\end{proof}

In a category with fibrations $\cC$  
which is the dual concept of categories with cofibrations, 
we consider the following axioms:

\sn
$\WEpop{3}$ For the commutative diagram (\ref{diagram WE3}) in $\cC$,
if $b$ and $c$ are in $w$, 
then so is $a$.

\sn
$\WEop{4}$ For a commutative diagram in $\cC$ 
$$
\xymatrix{
y \ar[d]_{a} \ar@{->>}[r]^{p} & x  \ar[d]_{b} & z \ar[d]_{c} \ar[l]\\ 
y' \ar@{->>}[r]_{p'} & x'  & z' \ar[l]& ,
}
$$
where $p$ and $p'$ are fibrations, 
if $a$, $b$ and $c$ are in $w$, 
then $a \times_{b} c:y \times_x z \to y' \times_{x'} z'$ is also.

\sn
$\WEop{5}$ $w$ is stable under base change by fibrations. 
That is, for any fibration $y\epi x$ in  $\cC$  
and $z\to x$ in $w$, 
the induced morphism $y\times_x z \to y$ is in $w$.

\sn
$\WEop{6}$ There are a functor
$M:\Ar\cC \to \cC$ and 
natural transformations $\gamma:\dom \Rightarrow M$ and 
$\delta:M \Rightarrow \ran$ 
such that for any morphism $f:x \to y$ in $\cC$, 
$\gamma(f):x \to M(f)$ is in $w$, 
$\delta(f):M(f) \to y$ is a fibration and
$\delta(f)\gamma(f)=f$.

\bn
Thus, one can establish the dual statement of 
Proposition \ref{we implications}. 
In particular, 
since an exact category $\cE$ is 
an additive category with bifibrations, 
we can apply $\cE$ to Proposition\ \ref{we implications} 
and its dual statement.

\section{Complicial exact category}
\label{sec:complicial}
We recall the theory of complicial exact categories with weak equivalences 
following \cite{Sch09}.
Let $\ChbZ$  be the exact category 
of bounded chain complexes of finitely generated free $\Z$-modules. 
Its exact structure is given by 
the degree-wise split sequences. 
There is a symmetric monoidal tensor product 
$\otimes: \ChbZ \times \ChbZ \to \ChbZ$ 
which extends the usual tensor product of free $\Z$-modules 
defined by 
$(a\otimes b)^n := \bigoplus_{i+j=n} a^i \otimes b^j$. 
Its differential $d^n: (a\otimes b)^n \to (a\otimes b)^{n+1}$ is 
$$
 d^i_a \otimes \id_{b^j} +(-1)^i \id_{a_i} \otimes d^j_b:a^{i}\otimes b^j \to (a^{i+1}\otimes b^j) \oplus  (a^i\otimes b^{j+1})  
$$
on $a^i\otimes b^j \subset (a\otimes b)^n$ ($i+j=n$). 
The unit is the chain complex $1\!\!1$ which is $\Z$  in degree $0$  
and $0$ elsewhere. 
The complex $C$ is $\Z$  in degrees $0$ and $-1$  
and is $0$ otherwise. 
The only non-trivial differential is $d^{-1} = \id_{\Z}$. 
The complex $T$ is $\Z$ in degree $-1$ and $0$ elsewhere. 
Note that we have a conflation of chain complexes 
$1\!\!1 \mono C \epi T$.

\begin{df}[\cite{Sch09}, Def.\ 6.2]
\label{df:complicial exact category}
 An exact category  $\cE$  is said to be {\it complicial}\/ 
if it is equipped with a bi-exact action 
  $\otimes: \ChbZ \times \cE \to \cE$ 
of the symmetric monoidal category $\ChbZ$  on  $\cE$. 
For an object $x\in \cE$, 
put $Cx := C\otimes x$ and $Tx := T\otimes x$. 
A sequence $x \mono Cx \epi Tx$ forms a conflation. 
\end{df}

For any morphism $f :x\to y$ in a complicial exact category $\cE$, 
the {\it cone} $\Cone(f)$ is defined by 
the push-out of $f$ along the inflation $x \mono Cx$. 
We call $\Cyl(f) := y \oplus Cx$ the {\it cylinder} of $f$. 
These make the following conflations:
$y \mono \Cone(f) \epi Tx$, and $x \mono \Cyl(f) \epi \Cone(f)$.
We associate a triangulated category 
$\Ebar$ called the {\it stable category} for  
a complicial exact category $\cE$ as follows:   
An inflation $i: x\rinf y$ in $\cE$ 
is called a {\it Frobenius inflation} 
if for any object $u$ and 
a morphism $f: x \to Cu$ in $\cE$, 
there is a morphism $g:y \to Cu$ such that $f=gi$. 
A deflation $p:x \rdef y$ in $\cE$ 
is called a {\it Frobenius deflation} 
if for any object $u$ and 
a morphism $f:Cu \to y$ in $\cE$, 
there is a morphism $g:Cu \to Cx$ such that $f=pg$. 
The category $\cE$ endowed with 
the class of Frobenius inflations and Frobenius deflations is 
a Frobenius exact category. 
That is,  
it has enough projective and injective objects 
and the class of projective objects and 
the injective objects coincide.
An object $x$ is a projective-injective object 
if and only if it is a direct summand of 
$Cu$ for some object $u$ in $\cE$ (\cite{Sch09}, Lem.\ B.16). 
For any morphism $f:x\to y$ in $\cE$, 
$x \mono Cx$ is a Frobenius inflation, 
thus the conflations 
$x \mono Cx \epi Tx$ and $x \mono \Cyl(f) \epi \Cone(f)$ 
are Frobenius conflations. 
From now on, we always consider the complicial exact category 
$\cE$ as the Frobenius category. 
Recall that 
two maps $f,g :x\to y$ in $\cE$ are called {\it homotopic}, 
if their difference factors through a projective-injective object. 

\begin{lem}
\label{lem:cc}
For any object $x$ in $\cE$,  
$Cx = C\otimes x$ is contractible.
\end{lem}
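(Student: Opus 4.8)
The plan is to read off the claim from the description of projective--injective objects recalled just above the statement: an object of $\cE$ is projective--injective for the Frobenius structure given by the Frobenius in/deflations precisely when it is a direct summand of $Cu$ for some $u\in\cE$. Applying this with $u=x$, the object $Cx=C\otimes x$ is a direct summand of itself and is therefore projective--injective.

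From here the lemma is immediate once we fix what "contractible" means: an object $y\in\cE$ is contractible exactly when $\id_y$ is homotopic to $0_y$, i.e.\ when the difference $\id_y-0_y=\id_y$ factors through a projective--injective object (equivalently, when $y$ becomes a zero object of the stable category $\Ebar$). For $y=Cx$ this holds for a trivial reason: $\id_{Cx}$ factors as $Cx\xrightarrow{\id}Cx\xrightarrow{\id}Cx$ through the projective--injective object $Cx$ itself, so $\id_{Cx}\simeq 0_{Cx}$ and $Cx$ is contractible.

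If one prefers an argument more in the spirit of the complicial formalism, one can instead transport an explicit contraction along the action functor. The complex $C\in\ChbZ$ is contractible: the only nonzero component of a contracting homotopy $s$ is $s^{0}=\id_{\Z}\colon C^{0}\to C^{-1}$, and using $d^{-1}=\id_{\Z}$ one checks at once that $ds+sd=\id_{C}$; hence $C$ is a projective--injective object of $\ChbZ$. Applying the exact action functor $-\otimes x\colon\ChbZ\to\cE$ carries the null-homotopy of $\id_{C}$ to a null-homotopy of $\id_{C\otimes x}=\id_{C}\otimes\id_{x}$, giving the lemma again.

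I do not expect a genuine obstacle here: the only point requiring care is to invoke the correct notion of contractibility (identity null-homotopic $\Leftrightarrow$ identity factors through a projective--injective $\Leftrightarrow$ zero in $\Ebar$) and then to note that $Cx$ is literally of the form $Cu$, so that its identity factors through a projective--injective object in the tautological way.
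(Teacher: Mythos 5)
Your proof is correct, and your primary argument takes a slightly different (and more direct) route than the paper's. The paper's own proof stays inside $\ChbZ$: it observes that $\id_C$ factors through $CC=C\otimes C$, so that after applying the bi-exact action $-\otimes x$ the identity of $Cx$ factors through the projective--injective object $CCx$, whence $Cx$ is zero in $\Ebar$. You instead invoke the characterization of projective--injective objects (direct summands of objects $Cu$) recalled just before the lemma, note that $Cx$ is tautologically of this form, and conclude that $\id_{Cx}$ factors through a projective--injective object, namely $Cx$ itself. This buys you a one-line proof that never leaves $\cE$; the paper's version buys an argument that does not need the cited classification of projective--injectives, only the concrete factorization of $\id_C$ through $CC$. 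One small caveat about your second, ``complicial'' variant: the contracting homotopy $s$ of $C$ is a degree $-1$ map, not a morphism of $\ChbZ$, so the action functor cannot literally be applied to it; to make that variant rigorous you must first repackage the null-homotopy as the factorization $\id_C\colon C\to CC\to C$ (or as a morphism out of the cylinder), at which point you have reproduced the paper's proof. This does not affect the validity of your main argument.
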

\begin{proof}
In $\ChbZ$, the identity map $C \to C$ 
factor through $CC$. 
Hence $Cx$ is contractible.
\end{proof}

The {\it stable category} $\Ebar$ of the Frobenius category 
$\cE$ 
is the category whose objects are the objects of $\cE$  
and whose morphisms are the homotopy classes of maps in $\cE$. 
It is known that the stable category $\Ebar$  
is a triangulated category. 
Distinguished triangles in $\Ebar$  
are those triangles which are isomorphic in $\Ebar$ to sequences 
of the form 
\begin{equation}
\label{eq:dt}
x \onto{f} y \to  \Cone(f) \to Tx
\end{equation}
for $f:x\to y$ in $\cE$.

\begin{df}[\cite{Sch09}, Def.\ 6.9]
\label{df:complicial exact category with we}
A class of morphisms $w$ in a complicial exact category $\cE$ 
is called 
a {\it{class of weak equivalences}}\/ 
if $w$ satisfies the following conditions: 

\sn
$\WE0$ The tensor product preserves weak equivalences in both variables, 
that is, if $f$  is a homotopy equivalence in $\ChbZ$  and $g$  is in $w$, 
then $f\otimes g$ is in $w$,

\sn
$\WEp1$ Every homotopy equivalence is in $w$,

\sn
$\WE2$, $\WE3$ and $\WE7$ in the last section, 
namely, $w$ satisfies the 2 out of 3 and 
and is closed under extensions and retracts.

We denote the class of all classes of weak equivalences 
by $\WEE$. 
For a class of weak equivalences $w$  in $\cE$, 
we say that the pair $\bE := (\cE,w)$  
is a {\it complicial exact category with weak equivalences}.
\end{df}


Every class of weak equivalences $w$ 
in a complicial exact category $\cE$
satisfies ({\bf{WE 6}}). 
If fact, 
for a morphism $f: x\to y$ in  $\cE$,  
we have a conflation $x \mono \Cyl(f) \epi \Cone(f)$.
By Lemma \ref{lem:cc}, 
$\Cyl(f) = Cx \oplus y$ and $y$ are homotopy equivalence. 
Hence $\WEp1$ implies $\WE6$.
Proposition\ \ref{we implications} says that 
$w$ satisfies ({\bf{WE 4}}) and ({\bf{WE 5}}).\ 
Furthermore, it is easy to verify that 
$w$ satisfies  ({\bf{WE 4}})${}^{\op}$ - ({\bf{WE 6}})${}^{\op}$. 
An object $x$ in $\cE$ is said to be {\it{$w$-trivial}} 
if the canonical map $0 \to x$ is in $w$. 
Note that by the axioms $\WEp1$ and $\WE2$, 
this condition is equivalent to the condition that 
the canonical map $x \to 0$ is in $w$. 
We denote the class of $w$-trivial objects by $\cE^w$ 
and sometimes consider $\cE^w$ as the full subcategory of $\cE$ 
of the $w$-trivial objects.

\begin{lem}
{\label{char of we}}
A morphism $f:x \to y$  in $\cE$ is in $w$ if and only if 
its mapping cone $\Cone(f)$ is in $\cE^w$.
\end{lem}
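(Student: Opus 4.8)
The plan is to interpose between $f$ and $\Cone(f)$ the canonical morphism $g\colon Cx\to\Cone(f)$ coming out of the construction of the cone, and to prove the two equivalences $f\in w \Leftrightarrow g\in w$ and $g\in w \Leftrightarrow \Cone(f)\in\cE^w$. Recall that $\Cone(f)=Cx\coprod_x y$ is the push-out of $f\colon x\to y$ along the inflation $x\mono Cx$. Since pushing a conflation out along a morphism from its sub-object again yields a conflation with the same quotient — the standard behaviour of conflations in an exact category, already used in the proof of Proposition \ref{we implications}(i) — the defining push-out square of $\Cone(f)$ extends to a map of conflations
$$
\xymatrix{
x \ar@{>->}[r] \ar[d]_{f} & Cx \ar@{->>}[r] \ar[d]^{g} & Tx \ar@{=}[d]\\
y \ar@{>->}[r] & \Cone(f) \ar@{->>}[r] & Tx & .
}
$$
I will also use two elementary facts: that $Cx$ is $w$-trivial (Lemma \ref{lem:cc} together with $\WEp{1}$), and that any morphism $h\colon A\to B$ with $A, B\in\cE^w$ lies in $w$ — for this, apply $\WE{2}$ to the composable pair $(0\to A,\ h)$, whose composite $0\to B$ is in $w$.

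The equivalence $g\in w \Leftrightarrow \Cone(f)\in\cE^w$ is then immediate: if $\Cone(f)\in\cE^w$ then $g$ is a morphism between two $w$-trivial objects, hence $g\in w$ by the second fact; conversely, if $g\in w$ then, since $0\to Cx$ is in $w$ by the first fact, $\WE{2}$ applied to the composable pair $(0\to Cx,\ g)$ gives $0\to\Cone(f)\in w$, i.e.\ $\Cone(f)\in\cE^w$. For $f\in w \Leftrightarrow g\in w$ I read off the displayed diagram, whose right-hand vertical map $\id_{Tx}$ is always in $w$: if $f\in w$, then $\WE{3}$ applied with $a=f$ and $c=\id_{Tx}$ — equivalently, $\WE{5}$ applied to the push-out square — shows $g\in w$; if $g\in w$, then the dual axiom $\WEpop{3}$, applied with $b=g$ and $c=\id_{Tx}$, shows $f\in w$. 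The axiom $\WEpop{3}$ is available because $\cE$ has bifibrations and $w$ satisfies the dual of Proposition \ref{we implications}(iii), whose hypotheses $\WE{1}$ (a consequence of $\WEp{1}$) and $\WEop{4}$ both hold for $w$.

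Thus the argument is a short chain of applications of $\WE{2}$, $\WE{3}$ (or $\WE{5}$), $\WEpop{3}$ and Lemma \ref{lem:cc}. The only point that needs a little care is the first one — checking that the push-out defining $\Cone(f)$ really produces the conflation $y\mono\Cone(f)\epi Tx$ lying over $x\mono Cx\epi Tx$ via $g$ — and this is a routine property of exact categories rather than a genuine obstacle; once that diagram is in place the rest is essentially bookkeeping.
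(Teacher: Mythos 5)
Your proof is correct, but it runs through a different diagram than the paper's. The paper factors $f$ as $x\mono\Cyl(f)\onto{\beta(f)}y$ and compares the conflation $x\mono\Cyl(f)\epi\Cone(f)$ with $y\onto{\id}y\epi 0$: since $\beta(f)\in w$ by $\WE6$, a single application of $\WEp{3}$ (resp.\ its dual) to the vertical maps $(f,\beta(f),\Cone(f)\to 0)$ gives each direction of the equivalence in one step. You instead work with the defining push-out of the cone, comparing $x\mono Cx\epi Tx$ with $y\mono\Cone(f)\epi Tx$ via $(f,g,\id_{Tx})$, and then convert ``$g\in w$'' into ``$\Cone(f)\in\cE^w$'' using the contractibility of $Cx$ (Lemma \ref{lem:cc}) and $\WE2$. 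Both arguments ultimately lean on the same gluing axioms ($\WE3$/$\WEp{3}$ and their duals, all available by Proposition \ref{we implications} and its dual), so neither is more economical in hypotheses; the paper's version is slightly shorter because the quotient term of its comparison is $0$ rather than $Tx$, which makes the translation between ``a map is in $w$'' and ``an object is $w$-trivial'' automatic, whereas you need the extra intermediate equivalence $g\in w\Leftrightarrow\Cone(f)\in\cE^w$. All the individual steps you invoke ($Cx$ being $w$-trivial, the two-out-of-three trick for maps between $w$-trivial objects, $\WE5$ for the push-out, and $\WEpop{3}$ from the dual of Proposition \ref{we implications}(iii)) are justified, so the argument stands as written.
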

\begin{proof}
Let us consider the following diagram:
$$\xymatrix{
&x \ar@{>->}[r]^{\alpha(f)} \ar[d]_f & \Cyl(f) \ar[d]^{\beta(f)} \ar@{->>}[r] & \Cone(f) \ar[d]\\
& y \ar[r]_{\id_y} & y \ar[r] & 0 & ,   
}$$
where $\beta(f)$ is in $w$ by ({\bf{WE 6}}).
Assume that $f$ is in $w$. 
Then applying ({\bf{WE 4}}) to the diagram above, 
we learn that $\Cone(f)$  is  $w$-trivial. 
Next suppose that $\Cone(f)$ is $w$-trivial. 
Then applying ({\bf WE 4})$^{\op}$ to the diagram above, 
we learn that $f$ is in $w$.
\end{proof}

Next we introduce a {\it null class}\/ associated with a class of 
weak equivalences in a complicial exact categories 
and establish a bijective correspondence 
between null classes and classes of weak equivalences in 
the complicial exact category. 
This is an analogue of a bijective correspondence 
between thick subcategories and localizing systems in a triangulated category.  

\begin{df}
{\label{Null class df}}
An additive full subcategory $\cN$ of $\cE$  
is called a {\it{null class}} 
if it satisfies the following axioms:

\sn
$\NC1$ 
If $x$ is an object in $\cN$ and $y$ is an object in $\cE$ 
which is homotopy equivalent to $x$, 
then $y$ is also in $\cN$.

\sn
$\NC2$ 
For $a\in \ChbZ$ and $x\in \cN$, 
we have $a\otimes x \in \cN$.

\sn 
$\NC3$ 
For any conflation $x \mono y \epi z$
in $\cE$, 
if $x, z$ are in $\cN$, so is $y$. 
 
\sn
$\NC7$ 
If there are maps $i:x \to y$ and $p:y\to x$ 
with $p i = \id_x$ and $y \in \cN$, 
then $x\in \cN$. 

\sn
We denote the class of all null classes on $\cE$ by $\NCE$. 
\end{df}

For any null class $\cN$ on $\cE$, 
by $\NC3$, 
it becomes an exact category in the natural way 
and by $\NC2$, it is complicial. 
As in the last section, we can consider the stable category $\Nbar$ of $\cN$ 
which is a full triangulated subcategory of $\underline{\cE}$

\begin{lem}
{\label{fund bij}}
Let  $\cE$  be a complicial exact category. 

\sn
$\mathrm{(i)}$ For any class of weak equivalences $w$ on $\cE$, 
the class of $w$-trivial objects $\cE^w$ is a null class.

\sn
$\mathrm{(ii)}$ Conversely, for any null class $\cN$ on $\cE$, 
let us define the class of morphisms $w_{\cN}$ by 
$$
  w_{\cN}:=\{f\in\Mor\cE\ |\ \Cone(f) \in \cN\}.
$$
Then $w_{\cN}$ is a class of weak equivalences. 

\sn
$\mathrm{(iii)}$ 
For the associations $w \mapsto \cE^w$ and 
$\cN \mapsto w_{\cN}$ gives a bijective correspondence between $\WEE$ and $\NCE$.
\end{lem}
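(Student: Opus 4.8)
The plan is to prove (i) and (ii) first, so that the two associations $w \mapsto \cE^w$ and $\cN \mapsto w_{\cN}$ are well-defined, and then establish (iii) by checking that the two round-trip composites are the identity. For (i), I would take a class of weak equivalences $w$ and verify that $\cE^w$ satisfies $\NC1$--$\NC3$ and $\NC7$. Axiom $\NC1$ follows from $\WEp1$ together with $\WE2$: if $0\to x$ is in $w$ and $y$ is homotopy equivalent to $x$, then the homotopy equivalence $x\to y$ is in $w$ by $\WEp1$, hence $0\to y$ is in $w$ by composition. Axiom $\NC2$ is exactly $\WE0$ applied to $g:0\to x$ and $f=\id_a$ (more precisely, $a\otimes(0\to x) = (0\to a\otimes x)$, and $\id_a$ is a homotopy equivalence). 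Axiom $\NC3$ is a direct consequence of $\WE3$ applied to the morphism of conflations from $0\mono 0\epi 0$ to $x\mono y\epi z$. Axiom $\NC7$ is $\WE7$ applied with the square having $0$'s on top; alternatively it follows since $x$ is a retract of $y$ and $\cE^w$ is closed under the relevant retracts — here one uses that $0\to x$ is a retract of $0\to y$ in $\Ar(\cE)$. That $\cE^w$ is an additive full subcategory is immediate since $\WE3$ gives closure under finite direct sums.

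For (ii), given a null class $\cN$, set $w_\cN = \{f : \Cone(f)\in\cN\}$ and check Definition \ref{df:complicial exact category with we}. For $\WEp1$: if $f$ is a homotopy equivalence then $\Cone(f)$ is a contractible complex, hence homotopy equivalent to $0$, hence in $\cN$ by $\NC1$ (using $0\in\cN$, which holds by $\NC7$ or by taking the zero object as a retract of any object of $\cN$, or simply since $\cN$ is additive). Actually the cleanest route: $\Cone(\id_x) = Cx$ is contractible by Lemma \ref{lem:cc}, and for a general homotopy equivalence $f$ one notes $\Cone(f)$ is a summand of $Cu$ for a suitable $u$; in any case $\Cone(f)\in\cN$ by $\NC1$ and $\NC7$. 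For $\WE0$: if $f$ is a homotopy equivalence in $\ChbZ$ and $g\in w_\cN$, one computes that $\Cone(f\otimes g)$ is, up to homotopy equivalence, a complex built from $\Cone(g)$ by tensoring with objects of $\ChbZ$ and taking extensions and summands, so $\NC1$--$\NC3$, $\NC7$ apply; I would spell this out using $\Cone(f\otimes g)\simeq (\text{cone of }f)\otimes(\text{target}) \text{ extended by } (\text{source})\otimes\Cone(g)$-type decompositions. For $\WE2$ (two-out-of-three), $\WE3$ (extensions), and $\WE7$ (retracts), the key technical device is Lemma \ref{char of we}'s proof pattern: translate a statement about membership in $w_\cN$ into a statement about cones lying in $\cN$, and then use the octahedral-type conflations among $\Cone(f)$, $\Cone(g)$, $\Cone(gf)$ available in the Frobenius/triangulated structure of $\Ebar$. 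Concretely, for composable $f,g$ there is a conflation (in $\Ebar$, a distinguished triangle) relating $\Cone(f)$, $\Cone(gf)$, $\Cone(g)$; then $\NC3$ and $\NC7$ deliver two-out-of-three, $\NC3$ gives closure under extensions directly, and $\NC7$ gives the retract axiom.

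For (iii), I would show $w_{\cE^w} = w$ and $\cE^{w_\cN} = \cN$. The inclusion $\cE^{w_\cN}\subseteq\cN$: if $0\to x\in w_\cN$ then $\Cone(0\to x) = x$ (up to homotopy equivalence, since $\Cyl(0\to x) = Cx\oplus x \simeq x$, hence $\Cone(0\to x)\simeq x$), so $x\in\cN$ by $\NC1$; conversely if $x\in\cN$ then $\Cone(0\to x)\simeq x\in\cN$ so $0\to x\in w_\cN$. For $w_{\cE^w} = w$: $f\in w_{\cE^w}$ means $\Cone(f)\in\cE^w$, i.e. $\Cone(f)$ is $w$-trivial, which by Lemma \ref{char of we} is equivalent to $f\in w$. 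So both composites are the identity, giving the bijection. The main obstacle I anticipate is the verification of $\WE0$ and the two-out-of-three axiom for $w_\cN$ in (ii): these require knowing that the relevant cones are related by honest conflations in $\cE$ (not merely triangles in $\Ebar$), so that $\NC3$ applies — I would either exhibit these conflations explicitly using the cylinder and cone constructions, or first note that $\cN$ being closed under homotopy equivalence ($\NC1$) lets one pass freely between $\cE$-level conflations and $\Ebar$-level triangles, which is the point that makes the whole correspondence an exact analogue of the thick-subcategory/localizing-system dictionary alluded to before the statement.
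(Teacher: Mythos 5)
Your proposal is correct and follows essentially the same route as the paper: (i) and (ii) are checked axiom-by-axiom with the same translations (cones of homotopy equivalences are contractible, the factorization $f\otimes g=(\id\otimes g)(f\otimes\id)$ plus the octahedral triangle in $\Ebar$ for $\WE{0}$ and $\WE{2}$, the honest conflation of cones for $\WE{3}$, and $\NC{1}$ to pass between $\cE$-level conflations and $\Ebar$-level triangles), and (iii) is the same pair of round-trip computations resting on Lemma \ref{char of we}. The only blemish is the parenthetical ``$\Cyl(0\to x)=Cx\oplus x$,'' which should read $x\oplus C0=x$ (so in fact $\Cone(0\to x)=x$ on the nose); the conclusion you draw from it is unaffected.
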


\begin{proof}
(i) Since $0$ is $w$-trivial, it is in $\cE^w$. 
In particular $\cE^w$ is non-empty.

\sn
$\NC1$ Let $x$ be a $w$-trivial object in $\cE$ 
and $f:x \to y$ is a homotopy equivalent. 
Then by $\WEp1$, $f$ is in $w$. 
Therefore by $\WE2$, $y \to 0$ is in $w$.

\sn
$\NC2$ Let $x$ be a $w$-trivial object in $\cE$
and $a$ an object in $\ChbZ$. 
Since the end-functor $a \otimes ?$ on $\ChbZ$ is additive, 
there is a canonical isomorphism $0 \isomto a\otimes 0$. 
Since $\otimes$ preserves $w$ $\WE0$, 
the canonical morphism $a\otimes x \to a\otimes 0$ is in $w$.
Therefore by $\WE2$, we learn that $a\otimes x \to 0$ is in $w$.

\sn
$\NC3$ In the diagram below
$$
\xymatrix{
x \ar@{>->}[r] \ar[d] & y \ar[d] \ar@{->>}[r] & z \ar[d]\\
0 \ar@{>->}[r] & 0 \ar@{->>}[r] & 0 &, 
}$$ 
assume that $x$ and $z$ are in $\cE^w$. 
Then by $\WE3$, $y$ is also in $\cE^w$.

\sn
$\NC7$ 
Assume that there are maps $i:x \to y$ and $p:y\to x$ 
with $p i = \id_x$ and $y \in \cE^w$. 
By $\WE7$, we have $x\in\cE^w$. 

\sn
(ii) $\WE0$ 
Let $f:a\to b$ be a homotopy equivalence in $\ChbZ$ 
and $g:x\to y$ in $w_{\cN}$. 
Note that $f\otimes \id_y$ 
is a  homotopy equivalence in $\cE$. 
By $\NC1$, we have $\Cone(f\otimes \id_y)$ in $\cN$. 
From the isomorphism 
$\Cone (\id_a \otimes g)\isomto a \otimes \Cone(g)$ 
and $\NC2$, we have $\Cone(\id_a \otimes g) \in \cN$. 
From the equality 
$f\otimes g = (\id_a \otimes g)  (f\otimes \id_y)$, 
there is a distinguished triangle in $\Ebar$
$$
  \Cone(\id_a \otimes g) \to \Cone(f\otimes g) \to \Cone (f\otimes \id_y) \onto{+1}
$$
by the octahedral axiom. 
Since $\Nbar$ is triangulated, 
$\Cone(f\otimes g)$ in $\Nbar$. 
Then by $\NC1$, it is in $\Nbar$.

\sn 
$\WEp1$ 
Let $f:x \to y$ be a homotopy equivalence in $\cE$. 
Then $\Cone(f)$ is contractible 
(that is, $\Cone(f) \to 0$ is a homotopy equivalence). 
Therefore by $\NC1$, $\Cone(f)$ is in $\cN$.

\sn 
$\WE2$ Let us consider morphisms $x \onto{f} y \onto{g} z$ in $\cE$. 
Then by the octahedral axiom, 
we have a distinguished triangle in $\Ebar$ 
$$
\Cone(f) \to \Cone(gf) \to \Cone(g) \onto{+1}.
$$ 
If we assume two of $f$, $g$ and $gf$ are in $w_{\cN}$, 
then their mapping cones are in $\Nbar$.
Since $\Nbar$ is triangulated, 
the mapping cone of the third one is also in $\Nbar$. 
Then the assertion follows from $\NC1$.

\sn 
$\WE3$ 
For the commutative diagram (\ref{diagram WE3}) 
$$
\xymatrix{
x \ar@{>->}[r] \ar[d]_{a} & y \ar@{->>}[r] \ar[d]_{b} & z \ar[d]_{c}\\ 
x' \ar@{>->}[r] & y' \ar@{->>}[r] & z'& ,
}
$$
in $\cE$. 
We have a conflation in $\cE$
$\Cone(a) \mono \Cone(b) \epi \Cone(c).$
If we assume $\Cone(a)$ and $\Cone(c)$ are in $\cN$, 
then by $\NC3$, $\Cone(b)$ is also in $\cN$.

\sn
$\WE7$ 
For the commutative diagram (\ref{diagram WE7}),  
with $b \in w_{\cN}$. 
We have a retraction $\Cone(a) \to \Cone(b)$
with $\Cone(b)\in\cN$. 
Therefore $\Cone(a) \in \cN$ by $\NC7$. 

\sn
(iii)
For any class of weak equivalences $w$ on $\cE$, 
by Lemma \ref{char of we} we have 
$$
  w_{\cE^w}=\{f\in\Mor\cE\ |\ \Cone(f)\in \cE^w\} = w.
$$
For any null class $\cN$ on $\cE$, we have 
$\cE^{w_{\cN}} = \{x\in\Ob\cE\ |\ Tx=\Cone(x\to 0) \in \cN\}=\cN$, 
where the last equality follows from  ({\bf{NC 2}}).
\end{proof}

Let $\bE = (\cE,w)$ be a complicial exact category with weak equivalences. 
Since the $w$-trivial objects $\cE^w$ is also a complicial in $\cE$, 
we have its stable category $\ul{\cE}^w$ 
which is a full triangulated subcategory of $\ul{\cE}$. 
The two Frobenius categories $\cE$ and $\cE^w$ 
have the same injective-projective objects. 
The inclusion $\cE^w \subset \cE$ induces 
a fully faithful triangulated functor 
$\ul{\cE}^w \to \ul{\cE}$ (\cite{Sch09}, 6.15). 
The triangulated category $\cT(\bE)$ 
associated with $\bE$ 
is the Verdier quotient 
$\cT(\bE) = \ul{\cE}/\ul{\cE}^w$. 
The distinguished triangles in $\cT(\bE)$ 
are triangles which are isomorphic to triangles of the form (\ref{eq:dt}).
The canonical projection  
$\pi:\cE \to \cT(\bE)$ 
from the Frobenius category $\cE$ 
induces an isomorphism 
$w^{-1} \cE \isomto \cT(\bE)$, 
where  $w^{-1}\cE$  is obtained from $\cE$  
by formally inverting the weak equivalences (\cf \cite{Sch09}, Def.\ 6.17). 
Note that $\pi$ sends a (Frobenius) conflation 
to a distinguished triangle in $\cT(\bE)$. 

\begin{lem}[\cite{Sch09}, Exerc.\ 6.16,\ 6.18]
\label{lem:exercise}
$\mathrm{(i)}$ $\Ewbar$ is closed under retract in $\Ebar$. 
In particular, objects of $\Ebar$ which are isomorphic to 
object of $\Ewbar$ in $\Ebar$ are already 
in $\Ewbar$. 

\sn
$\mathrm{(ii)}$ A morphism $f:x\to y$ in $\cE$ is a weak equivalence 
if and only if $\pi(f)$ is an isomorphism in $\cT(\bE)$.
\end{lem}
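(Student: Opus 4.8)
The plan is to prove (i) first and then deduce (ii) from it: the nontrivial implication of (ii) will rely on $\Ewbar$ being a \emph{thick} triangulated subcategory of $\Ebar$, and thickness is exactly the triangulatedness already recorded above together with the closure under retracts that (i) provides.

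For (i), I would start from a retract datum in $\Ebar$: morphisms $\bar\imath\colon x\to y$ and $\bar p\colon y\to x$ with $\bar p\,\bar\imath=\id_x$ and $y\in\cE^w$. Choosing representatives $i,p$ in $\cE$, the composite $pi-\id_x$ factors through a projective--injective object $P$. The crucial observation is that $P\in\cE^w$: it is a direct summand of some $Cu$, which is contractible by Lemma~\ref{lem:cc}, so $0\to Cu$ is a homotopy equivalence, hence in $w$ by $\WEp1$, whence $Cu\in\cE^w$ and then $P\in\cE^w$ by $\NC7$. Writing $pi-\id_x=ba$ with $a\colon x\to P$, $b\colon P\to x$, the maps $x\to y\oplus P$ and $y\oplus P\to x$ with components $(i,a)$ and $(p,-b)$ respectively compose to $\id_x$, so $x$ becomes an honest retract \emph{in $\cE$} of $y\oplus P$. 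Then $\NC3$ applied to the split conflation $y\mono y\oplus P\epi P$ (using that $\cE^w$ is a null class, Lemma~\ref{fund bij}(i)) gives $y\oplus P\in\cE^w$, and $\NC7$ finally gives $x\in\cE^w$. The last sentence of (i) is the special case where $\bar\imath$ is invertible in $\Ebar$, i.e.\ a homotopy equivalence in $\cE$, and also drops straight out of $\NC1$.

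For (ii), the forward direction is short: if $f\in w$ then $\Cone(f)\in\cE^w$ by Lemma~\ref{char of we}, so applying the exact functor $\pi\colon\Ebar\to\cT(\bE)$ to the triangle $x\onto{f}y\to\Cone(f)\to Tx$ gives a distinguished triangle in $\cT(\bE)$ with vanishing third vertex (as $\pi$ kills $\Ewbar$), and a morphism with zero cone is an isomorphism. For the converse I would argue: $\pi(f)$ an isomorphism forces $\pi(\Cone(f))\cong 0$ in $\cT(\bE)=\Ebar/\Ewbar$; by (i) the subcategory $\Ewbar$ is thick, and an object that becomes zero in a Verdier quotient by a thick subcategory already lies in that subcategory, so $\Cone(f)\in\cE^w$ and hence $f\in w$ by Lemma~\ref{char of we}.

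The main obstacle is the bookkeeping at the start of (i): turning a retraction that only holds up to homotopy into a genuine retraction in $\cE$ by enlarging the target with a projective--injective summand, and checking that summand is $w$-trivial. Once that is in place, (i) follows formally from $\NC3$ and $\NC7$, and (ii) is immediate from Lemma~\ref{char of we} plus the standard fact that a thick subcategory is the kernel of its Verdier localization.
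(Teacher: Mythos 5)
Your proposal is correct and follows essentially the same route as the paper: for (i) you upgrade the homotopy retraction to a genuine retraction in $\cE$ by adding a $w$-trivial projective--injective summand (the paper uses $Cx$ and the factorization $pi-\id_x=H\iota_x$ where you use a generic summand $P$ of some $Cu$, a purely cosmetic difference) and then invoke $\NC3$/$\NC7$; for (ii) both arguments combine Lemma~\ref{char of we} with the thickness of $\Ewbar$ established in (i) and the distinguished triangle on $\Cone(f)$. No gaps.
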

\begin{proof}
(i) 
Let $a$ be an object in $\Ewbar$ and 
$x$ an object in $\Ebar$. 
Assume that 
there are morphisms $p:a \to x$ and $i:x \to a$ 
such that $pi=\id_x$ in $\Ebar$. 
Hence we have a morphism $H:Cx \to x$ such that 
$H\iota_x=pi-\id_x$ in $\cE$, 
where $\iota_x:x \mono Cx$ is the inflation. 
Then there are morphisms 
$$ 
x \onto{
\left(\begin{smallmatrix}
i\\
-\iota_x
\end{smallmatrix}\right)
} a\oplus Cx \onto{
\left(\begin{smallmatrix}
p & H
\end{smallmatrix}\right)
} x$$
such that 
$ 
\left(\begin{smallmatrix}
p & H
\end{smallmatrix}\right)
\left(\begin{smallmatrix}
i\\
-\iota_x
\end{smallmatrix}\right)
=pi-H\iota_x=pi-(pi-\id_x)=\id_x
$. 
Since $\cE^w$ is closed under homotopy equivalences, 
we have $a\oplus Cx$ is in $\cE^w$ 
and therefore 
$x$ is in $\cE^w$.

\sn
(ii)
By Lemma \ref{char of we}, $f$ is in $w$ if and only if 
$\Cone(f)$ is in $\cE^w$. 
Since $\cE^w$ is closed under homotopy equivalences, 
this condition is equivalent to that 
in $\Ebar$,  
$\Cone(f)$ is in $\Ewbar$. 
By (i), 
$\Ewbar$ is thick in $\Ebar$.
Therefore this condition is equivalent to 
that $\pi(\Cone(f))$ is isomorphic to $0$ in $\cT(\bE)$. 
Since there is a distinguished triangle 
$x \onto{\pi(f)} y \to \Cone(f) \onto{+1}$
in $\cT(\bE)$, the final condition is equivalent to 
that $\pi(f)$ is an isomorphism in $\cT(\bE)$.
\end{proof}

If a full subcategory $\cN$ in $\cE$ 
satisfies $\NC{2}$ and $\NC{3}$, 
$\cN$ is also complicial,  
and the image of $\cN$ 
in the derived category $\TE$  
by the canonical projection $\pi:\E \to \TE$ 
is a triangulated subcategory.  
We define the {\it $w$-closure} $\cN_w$ of $\cN$ by 
the kernel of the composition $\cE \onto{\pi} \TE \to \TE/\pi(\cN)$; 
the full subcategory of $\cE$ 
whose objects are isomorphic to $0$ in $\TE/\pi(\cN)$.

\section{Quasi-weak equivalences}
\label{sec:deloop}
Let $\bE = (\cE,w)$ 
be an {\it idempotent complete} 
exact category with weak equivalences. 
We shall define a class of weak equivalences $qw$ in 
the category of chain complexes $\ChshE$ 
($\# \in \{b,+,-, \emptyset\}$) as follows: 
The quasi-isomorphisms closure of 
$\Ch^b(\cE^w)$ in $\ChshE$ is denoted by $\Ac_w^{\#}(\cE)$ 
which is a null class in $\ChshE$. 
The objects in $\Ac_w^{\#}(\cE)$  are called {\it $w$-acyclic complexes}. 
The class of weak equivalences 
associated with the null class 
$\Ac_w^{\#}(\cE)$ is denoted by $qw$ (Lem.\ \ref{fund bij})
which is called {\it the class of quasi-weak equivalences}. 
The category of chain complexes with quasi-weak equivalences 
$\Ch^{\#}(\bE)=(\ChshE, qw)$ forms 
a complicial exact category with weak equivalences. 
The null class $\Ac_w^{\#}(\cE)$ is the $\qis$-closure of $\Ch^{b}(\cE^w)$.
Hence $qw$ contains $\qis$ 
the class of quasi-isomorphisms in $\Ch^{\#}(\cE)$.

\begin{lem}
\label{lem:w preserve}
Assume that $\cE$ is complicial. 

\sn
$\mathrm{(i)}$ The inclusion functor
$i:\cE \to \ChshE$ sends 
a weak equivalence to a quasi-weak equivalence.

\sn
$\mathrm{(ii)}$ The class $qw$ contains 
chain maps $f:x \to y$ in $\Ch^b(\cE)$
which are degree-wise weak equivalences. 
\end{lem}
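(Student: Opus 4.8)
For part $\mathrm{(i)}$, the plan is to translate a weak equivalence $f\colon x\to y$ in $\cE$ into a statement about its cone. By Lemma \ref{char of we}, $f\in w$ if and only if $\Cone(f)\in\cE^w$. Now the cone of $f$ computed inside $\cE$ maps (via $i$) to the cone of $i(f)$ computed in $\ChshE$, viewing $x,y$ as complexes concentrated in degree $0$; indeed the cone construction is defined through the complicial action of $\ChbZ$, which is compatible with the inclusion $i$ since the action on $\ChshE$ extends the action on $\cE$ degreewise. Hence $\Cone(i(f))\cong i(\Cone(f))$, the image under $i$ of a $w$-trivial object, i.e.\ an object of $\Ch^b(\cE^w)$. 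By definition $\Ac^{\#}_w(\cE)$ is the $\qis$-closure of $\Ch^b(\cE^w)$, so $i(\Cone(f))\in\Ac^{\#}_w(\cE)$, and therefore $i(f)\in qw$ by Lemma \ref{fund bij} (the characterization of $w_{\cN}$ via cones, applied to $\cN=\Ac^{\#}_w(\cE)$).

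For part $\mathrm{(ii)}$, I would again reduce to showing that the cone of a degreewise weak equivalence $f\colon x\to y$ in $\Ch^b(\cE)$ lies in $\Ac^{\#}_w(\cE)$, via Lemma \ref{fund bij}. The key observation is that $\Cone(f)$, as a bounded complex, carries a finite filtration whose subquotients are the shifted complexes $(\Cone(f^n))[{-n}]$ built from the degreewise cones $\Cone(f^n)$ in $\cE$; since each $f^n\in w$, part $\mathrm{(i)}$ gives $\Cone(f^n)\in\Ac^{\#}_w(\cE)$, hence so is each shift by $\NC2$ (tensoring with the appropriate shift of $1\!\!1$ in $\ChbZ$). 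Then $\NC3$, applied finitely many times along the filtration of $\Cone(f)$, yields $\Cone(f)\in\Ac^{\#}_w(\cE)$, so $f\in qw$.

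An alternative and perhaps cleaner route for $\mathrm{(ii)}$ is to induct on the number of degrees in which $x$ and $y$ are nonzero: a degreewise weak equivalence of complexes of length $m$ sits in a commutative diagram of cofibration sequences (brutal truncation) relating it to a degreewise weak equivalence of shorter complexes and a one-term complex, and one invokes $\WE3$ for $qw$ — valid since $\Ch^{\#}(\bE)=(\ChshE,qw)$ is a complicial exact category with weak equivalences, as recorded just before the lemma. The base case is a weak equivalence concentrated in a single degree, which is handled by part $\mathrm{(i)}$ after shifting (using $\WE0$ or $\NC2$ with a shift of the unit $1\!\!1$).

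The main obstacle I anticipate is the first route's claim that the brutal-truncation filtration of $\Cone(f)$ has the stated subquotients and consists of genuine conflations in $\ChshE$ — one must check these are degreewise split (which they are, by construction of the cone as a direct sum in each degree) so that they are conflations for the chosen exact structure, and that the identification of subquotients with $\Cone(f^n)[{-n}]$ is compatible with differentials. This is a bookkeeping point about signs in the cone differential rather than a conceptual difficulty, but it is where care is needed; the inductive truncation argument sidesteps most of it by only ever splitting off one degree at a time.
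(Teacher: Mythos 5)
There is a genuine gap in your argument for part (i). The complicial action of $\ChbZ$ on $\ChshE$ is the total tensor product of complexes, not the degreewise extension of the action on $\cE$; consequently $C\otimes i(x)$ is the two-term complex $x\onto{\id}x$, not $i(Cx)$, and $\Cone(i(f))$ is the two-term complex $\cdots\to 0\to x\onto{f}y\to 0\to\cdots$, which is \emph{not} isomorphic to the one-term complex $i(\Cone(f))$. (If the action were the degreewise one, $T$ would not act as the shift and $qw$ would not contain the quasi-isomorphisms, so the whole construction would collapse.) Your identification $\Cone(i(f))\cong i(\Cone(f))$ is therefore false, and the step it was meant to justify --- that $\Cone(i(f))$ lies in $\Ac^{\#}_w(\cE)$ --- is exactly the nontrivial content of (i). The paper bridges the gap by using the biCartesian square defining $\Cone(f)$ in $\cE$ to produce a chain map $\phi$ from the two-term complex $[x\onto{f}y]$ to the two-term complex $z=[Cx\onto{p}\Cone(f)]$; the latter lies in $\Ch^b(\cE^w)$ because $Cx$ is contractible and $\Cone(f)\in\cE^w$ by Lemma \ref{char of we}, and $\phi$ is a quasi-isomorphism because its mapping cone is the acyclic complex $[x\mono\Cyl(f)\epi\Cone(f)]$. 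That explicit comparison is what your proposal is missing.

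The same conflation infects your first route for (ii): the brutal filtration of the chain-complex mapping cone $\Cone(f)$ (whose degree-$n$ term is $x^{n+1}\oplus y^n$) does not have subquotients of the form $i(\Cone_{\cE}(f^n))[-n]$, since $\Cone_{\cE}(f^n)$ is a pushout along $x^n\mono Cx^n$ and never appears as a subquotient of $x^{n+1}\oplus y^n$. Your second route for (ii) --- induction on the length of $x$ and $y$ via brutal truncations $\sigma^{\ge k}$ and $\WE3$ for $qw$, with base case reduced to (i) by shifting --- is precisely the paper's proof and is correct, but of course it still rests on a correct proof of (i).
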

\begin{proof}
(i) 
Let $f:x \to y$ be a weak-equivalence in $w$. 
Note that the mapping cone $\Cone(f)$ in $\cE$ 
is in $\cE^w$ (Lem.\ \ref{char of we}). 
Now we have a biCartesian square 
\begin{equation}
\label{diag:w to qw}
\vcenter{
\xymatrix{
x \ar[r]^{a} \ar[d]_{f} & Cx \ar[d]^p  \\
y \ar[r]^-b & \Cone(f) &. 
}}
\end{equation}
The mapping cone of $i(f)$ in $\ChshE$ is 
$\Cone(i(f)) = \cdots \to 0 \to x \onto{f} y \to 0 \to \cdots$.
On the other hand, we have a complex 
$z :=  \cdots \to 0 \to Cx \onto{p} \Cone f \to 0 \to \cdots$ 
in $\Ch^b(\cE^w)$. 
The above diagram (\ref{diag:w to qw})
gives a chain map $\phi:\Cone(i(f)) \to z$. 
Its mapping cone in $\ChshE$ is an acyclic complex
$$
 \cdots \to 0 \to x\to \Cyl(f) \to \Cone(f) \to 0 \to \cdots.
$$
Hence the chain map $\phi$ is a quasi-isomorphism 
and thus we have 
$\Cone(i(f)) \in \Ch^b(\cE^w)_{\qis} = \Chsh(\cE)^{qw}$.

\sn
(ii) 
We show the assertion by 
induction on the length of the complexes $x$ and $y$. 
The brutal truncation $\sigma^{\ge k}x$ is  
$\sigma^{\ge k}x :=  \cdots \to 0 \to 0 \to x^k \to x^{k+1} \to \cdots$ 
and put $\sigma^{<k}x := x/\sigma^{\ge k}x$. 
Consider the following commutative diagram:
$$
  \xymatrix{
    \sigma^{\ge k}x \ar@{>->}[r]\ar[d]^{\sigma^{\ge k}f} & x \ar[d]^f \ar@{->>}[r] & \sigma^{<k}x\ar[d]^{\sigma^{< k}f}\\
    \sigma^{\ge k}y \ar@{>->}[r] & y \ar@{->>}[r] & \sigma^{<k}y&.\\
  }
$$
Here, the horizontal sequences are conflations. 
By the assumption on the induction, we have $\sigma^{\ge k}f$ and $\sigma^{< k}f$ are in $qw$. 
Thus the assertion follows from $\WE3$. 
\end{proof}

We have a triangulated category 
$\cD^{\#}(\cE)  = \cT(\ChshE, \qis)$ 
for $\cE$, 
($\# \in \{b, +, -, \emptyset \})$
that is the derived category 
associated with the complicial exact category 
$\ChshE$ with quasi-isomorphisms as its weak equivalences. 
Similarly, 
when $\bE = (\cE,w)$ is complicial, 
we denote by $\cD^{\#}(\bE) = \cT(\Ch^{\#}(\cE),qw)$   
the {\it derived category} of $\bE$, 
which is defined by the derived category of 
the complicial exact category with weak equivalence 
$\Ch^{\#}(\bE) := (\Ch^{\#}(\cE),qw)$. 
The non-connective $K$-theory spectrum $\bK(\bE)$ 
is defined in \cite{Sch09}.  
Recall that the associated $K$-groups in positive degree 	
are agree with Waldhausen $K$-groups $K_i(\bE)$ 
and $\bK_0(\bE) = K_0(\cT(\bE)^{\sim})$, 
where $\cT(\bE)^{\sim}$  is the idempotent completion of $\cT(\bE)$. 
Note also $\cT(\bE)^{\sim}$ 
becomes a triangulated category (\cite{BS01}). 

\begin{thm}
{\label{partial GW}}
Assume that $\bE = (\cE, w)$ is complicial. 
Then we have the following isomorphisms for each $i \in \Z$:

\sn
$\mathrm{(i)}$ $\bK_i(\bE) \isomto \bK_i(\Ch^b(\bE))$. 

\sn
$\mathrm{(ii)}$ $\bK_i(\Ch^-(\bE)) = \bK_i(\Ch^+(\bE)) = 0$,

\sn
$\mathrm{(iii)}$ $\bK_{i}(\bE) \isomto \bK_{i+1}(\Ch(\bE))$.
\end{thm}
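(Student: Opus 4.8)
The plan is to reduce all three statements to the analogous facts for the derived categories $\cT(\bE)$, $\cT(\Ch^{\#}(\bE))$ and their idempotent completions, following the Schlichting machinery. The key observation is that, by construction, $\Ch^{\#}(\bE) = (\Ch^{\#}(\cE), qw)$ is a complicial exact category with weak equivalences whose class of weak equivalences $qw$ is defined by the null class $\Ac_w^{\#}(\cE)$, the $\qis$-closure of $\Ch^b(\cE^w)$; in particular Lemma \ref{lem:w preserve} tells us that the inclusion $i\colon\cE\to\Ch^{\#}(\cE)$ carries $w$ into $qw$, so we have an exact functor $\bE\to\Ch^{\#}(\bE)$ for each $\#$.

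First I would set up the localization sequence. For $\# \in \{b,+,-,\emptyset\}$, the category of $w$-acyclic complexes $\Ac_w^{\#}(\cE)$ sits inside $\Ch^{\#}(\cE)$, and the quotient of $\cT(\Ch^{\#}(\cE),qw)$ identifies with the Verdier quotient of $\cD^{\#}(\cE)$ by the thick subcategory generated by $\Ch^b(\cE^w)$; running this through the non-connective $K$-theory functor $\bK$ (which by \cite{Sch09} sends such a sequence of complicial exact categories to a homotopy fibration of spectra, and is invariant under idempotent completion on $\bK_0$) produces a fibration sequence relating $\bK(\Ch^{\#}(\cE)^w\text{-part})$, $\bK(\Ch^{\#}(\cE),\qis)=\bK(\cD^{\#}(\cE))$ and $\bK(\Ch^{\#}(\bE))$. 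The point of restricting to idempotent-complete $\cE$ is exactly that $\cD^b(\cE)$, hence $\cT(\bE)$, is then well-behaved and the fibration sequences do not lose $\pi_{-1}$-information.

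Then I would prove (i), (ii), (iii) in that order. For (i), the Gillet--Waldhausen comparison $\bK(\bE)\isomto\bK(\Ch^b(\bE))$ follows from the classical Gillet--Waldhausen theorem applied to $\cE$ and to $\cE^w$ simultaneously: the horizontal fibration sequences $\bK(\cE^w)\to\bK(\cE)\to\bK(\bE)$ and $\bK(\Ch^b(\cE^w))\to\bK(\Ch^b(\cE))\to\bK(\Ch^b(\bE))$ are connected by the inclusion functors, and the outer two vertical maps are equivalences by Gillet--Waldhausen for the exact categories $\cE$ and $\cE^w$; the five lemma on homotopy groups (equivalently, the map of fibration sequences) gives the middle equivalence, noting $\Ch^b(\bE)$ means $(\Ch^b(\cE),qw)$ with $qw$ restricted from the appropriate ambient category. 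For (ii), the Eilenberg swindle: $\Ch^{+}(\cE)$ and $\Ch^{-}(\cE)$ admit a flasque/infinite-sum functor (shift-and-add) which, being exact and carrying $qw$ to $qw$, forces $\bK(\Ch^{\pm}(\bE))\simeq 0$ by the usual additivity-plus-swindle argument; one checks the swindle functor is compatible with $\cE^w$ so it descends. For (iii), the delooping: one assembles the fibration sequence coming from $\Ch^b(\bE)\to\Ch^{+}(\bE)$ or $\Ch^{-}(\bE)\to\Ch(\bE)$ — more precisely $\bK(\Ch^{-}(\bE))\to\bK(\Ch(\bE))\to\bK(\Ch(\bE)/\Ch^{-}(\bE))$ together with $\bK(\Ch^b(\bE))\to\bK(\Ch^{+}(\bE))\to\bK(\Ch^{+}(\bE)/\Ch^b(\bE))$ — and uses (ii) to collapse the $\Ch^{\pm}$ terms and (i) to rewrite $\bK(\Ch^b(\bE))\simeq\bK(\bE)$, obtaining $\Omega\bK(\Ch(\bE))\simeq\bK(\bE)$, i.e. $\bK_i(\bE)\isomto\bK_{i+1}(\Ch(\bE))$ on homotopy groups. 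This is formally the same two-fibration-sequence argument Schlichting uses to deloop $\bK$ of an exact category.

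The main obstacle I expect is the bookkeeping of weak equivalences under the functors $\Ch^b\hookrightarrow\Ch^{+}\hookrightarrow\Ch$ and $\Ch^{-}\hookrightarrow\Ch$: one must check that each of these inclusions is an exact functor of complicial exact categories sending $qw$ to $qw$ and, crucially, that the induced sequences of derived categories are localization sequences whose associated $w$-trivial parts match up — i.e.\ that $\Ac_w^{+}(\cE)\cap\Ch^b(\cE)=\Ac_w^b(\cE)$ and similar identities, so that the quotient triangulated categories $\cT(\Ch^{\#}(\bE))$ fit into a tower mirroring the classical $\cD^b\subset\cD^{+}$ tower. Once these identifications are in place the rest is a diagram chase with Schlichting's localization and Eilenberg-swindle theorems, exactly as in the $w=\isom$ case, so the work is entirely in verifying that the null-class formalism of Lemma \ref{fund bij} is preserved along the truncation functors.
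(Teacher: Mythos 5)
Your proposal follows essentially the same route as the paper: the fibration theorem plus Gillet--Waldhausen for (i), the shift-and-sum Eilenberg swindle for (ii), and the two localization sequences collapsed by (i) and (ii) for (iii), with the compatibility of the acyclic/null classes across the boundedness conditions (which the paper verifies by truncation arguments using idempotent completeness) correctly identified as the key technical point. The only step you elide is that the fiber term supplied by the fibration theorem for $\Ch^b(\bE)$ is the $\qis$-closure $\Ac_w^b(\cE)$ rather than $\Ch^b(\cE^w)$ itself, so one must additionally compare localization sequences to get $\bK(\Ch^b(\cE^w),\qis)\isomto\bK(\Ac_w^b(\cE),\qis)$ --- a comparison the paper carries out explicitly.
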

  
\begin{proof}
(i) 
The complicial exact categories $\bE$  
and  $\Chb(\bE)$ admit ({\bf WE 6}).  
Hence they satisfy the {\it factorization axiom}, 
namely, any morphism is a composition of a cofibration followed by 
a weak equivalence (\cite{Sch06}, Appendix). 
By the very definition of the quasi-weak equivalences, 
we have $\Ac_w^b(\cE) = \ChbE^{qw}$ the $w$-acyclic objects in 
$\ChbE$ as null classes. 
Hence we have the following diagram: 
$$
\xymatrix{
\bK(\cE^w) \ar[r]\ar[d]^f & \bK(\cE) \ar[r]\ar[d]^g & \bK(\bE) \ar[d]^h \\
\bK(\Ac_w^b(\cE), \qis) \ar[r] & \bK(\Ch^b(\cE), \qis) \ar[r] & \bK(\Ch^b(\bE)).\\
}
$$
Here, the horizontal sequences are homotopy fibration of spectra 
by the fibration theorem (\cite{Sch06}, Thm.\ 11) 
and the vertical map $h$  is induced from the inclusion map $\cE\to \ChbE$ 
(Lem.\ \ref{lem:w preserve}). 
The vertical map  $g$  is a homotopy equivalence by 
the Gillet-Waldhausen theorem (\cite{Sch06}). 
Hence, it is enough to show the map $f$ is a homotopy equivalence. 
We have $\bK(\cE^w) \isomto \bK(\Ch^b(\cE^w), \qis)$  
from the Gillet-Waldhausen theorem again. 
Recall that $\Ac_w^b(\cE) =\Ch^b(\cE^w)_{\qis}$ is the $\qis$-closure of 
$\Ch^b(\cE^w)$. 
We have 
$\cD^b(\cE)/\cD^b(\cE^w) \isomto \cD^b(\cE)/\cT(\Ch^b(\cE^w)_{\qis},\qis)$.
By comparing the localization sequences, 
$\bK(\Ch^b(\cE^w), \qis) \isomto \bK(\Ac_w^b(\cE), \qis)$ 
and the assertion follows from it.

\sn
(ii) 
Let us consider the endofunctor 
$F={\bigoplus}_{n\in\mathbb{N}} [2n]$ 
on $\Ch^{+}(\cE)$. 
From the identities $F[2]\oplus \id \isomto F$ and 
$\bK(F[2])=\bK(F)$, 
we notice that $\bK(\id)=0$ by the additivity theorem.

\sn 
(iii)
The truncation functor is defined by 
$\tau^{\ge k}x := \cdots \to 0 \to \Im(\partial^{k-1}) \to x^k \to x^{k+1} \to \cdots$, 
for any $x \in \Ch(\cE)$, 
where $\partial^{k-1}:x^{k-1} \to x^{k}$ is 
the differential map. 
The kernel of the quotient map $x \to \tau^{\ge k}x$ 
is denoted by $\tau^{< k}x$. 
For any $x \in \ChbE \cap \Ch^+(\cE)^{qw}$, 
we have quasi-isomorphisms $x\onot {\sim} y \onto{\sim} z$ 
with $y\in \Ch^+(\cE)$ and $z\in \Chb(\cE^w)$. 
Since $\cE$ is idempotent complete, 
the canonical map $\tau^{<k}y \to y$ becomes a quasi-isomorphism 
for some $k$ (\cite{BS01}, Lem.\ 2.6). 
Hence, we have $\ChbE^{qw} = \ChbE \cap \Ch^+(\cE)^{qw}$.
On the other hand, 
for any $x \in \ChbE \cap \Ch^-(\cE)^{qw}$, 
we have quasi-isomorphisms $x\onto {\sim} y \onot{\sim} z$ 
with $y\in \Ch^-(\cE)$ and $z\in \Chb(\cE^w)$. 
The canonical map $y \to \tau^{\ge k}y$ becomes a quasi-isomorphism 
and thus $\ChbE^{qw} = \ChbE \cap \Ch^-(\cE)^{qw}$. 
Similarly, we have $\Ch^+(\cE)^{qw} = \Ch^+(\cE) \cap \ChE^{qw}$, and 
$\Ch^-(\cE)^{qw} = \Ch^-(\cE) \cap \ChE^{qw}$. 
We have the square of fully faithful inclusions which induces on 
category equivalences on the quotient (\cite{Sch06}, Proof of Lem.\ 7):
$$
\xymatrix{
&\cD^b(\cE) \ar[r] \ar[d] &  \cD^{+}(\cE) \ar[d]\\
&\cD^{-}(\cE) \ar[r] & \cD(\cE)&.
}
$$
The diagram extends to 
\begin{equation}
  \label{diag:derived cats}
   \vcenter{\xymatrix{
  & \cD^b(\bE) \ar[r] \ar[d] & \cD^+(\bE)\ar[d] \\
  & \cD^-(\bE) \ar[r]        & \cD(\bE) &.
    }}
\end{equation}
Here, all functors are fully faithful (Lem.\ \ref{lem:tri ker} below) 
and the induced functors on quotients are equivalences. 
Thus the assertion follows from the localization theorem 
and (i) and (ii). 
\end{proof}

\begin{lem}[\cite{Kel96}, Lem.\ 10.3]
\label{lem:tri ker}
Let 
$\cT$ be a triangulated category and 
$\cS$ and $\cN$ full triangulated subcategories of $\cT$. 
If each morphism $x \to y$ in $\cT$ 
with $x \in \cN$ and $y\in \cS$ 
factors through some object in $\cS\cap \cN$, 
then the canonical functor $\cS/\cS\cap\cN \to \cT/\cN$ is fully faithful. 
\end{lem}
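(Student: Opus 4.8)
The plan is to prove fully faithfulness by the usual calculus of fractions description of morphisms in a Verdier quotient, together with the factorization hypothesis. First I would recall that a morphism $\bar x\to\bar y$ in $\cT/\cN$ between the images of objects $x,y\in\cS$ is represented by a roof $x\xleftarrow{s} x'\xrightarrow{f} y$ in $\cT$ with $s$ a morphism whose cone lies in $\cN$, and two such roofs are identified if they are dominated by a common third roof. Faithfulness amounts to showing that if a morphism $f\colon x\to y$ in $\cS$ becomes zero in $\cT/\cN$, then it is already zero in $\cS/\cS\cap\cN$; fullness amounts to showing every roof as above can be replaced by one whose vertex $x'$ lies in $\cS$.

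For fullness I would argue as follows. Given $x\xleftarrow{s} x'\xrightarrow{f} y$ with $\Cone(s)\in\cN$, complete $s$ to a distinguished triangle $x'\xrightarrow{s} x\xrightarrow{g} n\to x'[1]$ with $n\in\cN$ (here I use that $\cN$ is triangulated, so a cone of a morphism out of $\cS$ need not be in $\cS$, but the cone of $s\colon x'\to x$ with $x\in\cS$ — wait, $x'$ need not be in $\cS$ yet). The cleaner route: start from the triangle on $g\colon x\to n$, namely $n[-1]\xrightarrow{h} x\to \operatorname{cone}(g)\to n$, so that $\operatorname{cone}(g)$ plays the role of $x'$; since $n\in\cN$ and $x\in\cS$, the composite $n[-1]\xrightarrow{h} x\xrightarrow{f} y$ is a morphism from an object of $\cN$ to an object of $\cS$, so by hypothesis it factors as $n[-1]\to w\to y$ with $w\in\cS\cap\cN$. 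Pushing $h$ out along $n[-1]\to w$ produces, via the octahedral axiom, an object $x''$ fitting in a triangle $w\to x''\to x\to w[1]$; since $w\in\cS$ and $x\in\cS$, we get $x''\in\cS$, the map $x''\to x$ has cone $w[1]\in\cN$ so is invertible in $\cT/\cN$, and by construction $f$ extends over $x''\to x$. This yields a roof with vertex in $\cS$ representing the given morphism, proving fullness. For faithfulness, if $f\colon x\to y$ in $\cS$ is killed in $\cT/\cN$, then $f$ factors through some $n\in\cN$ as $x\xrightarrow{p} n\xrightarrow{q} y$; here $q\colon n\to y$ goes from $\cN$ to $\cS$, so by the hypothesis it factors through $w\in\cS\cap\cN$, whence $f$ factors through $w$, and therefore $f$ is already zero in $\cS/\cS\cap\cN$ because $w$ becomes zero there.

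The main obstacle I anticipate is the bookkeeping in the fullness argument: one must be careful that the factorization hypothesis is applied to a morphism of the correct shape (source in $\cN$, target in $\cS$), and that the octahedral axiom is invoked so as to land the new vertex genuinely inside $\cS$ while keeping the cone of the comparison map inside $\cN$. A secondary point is that replacing one roof by an equivalent one must be checked to be compatible with the equivalence relation defining morphisms in the quotient, i.e.\ the constructed roof really represents the \emph{same} morphism; this is routine but should be stated. Since this lemma is quoted from \cite{Kel96}, Lem.~10.3, it would also be legitimate simply to cite that reference, but the sketch above records the argument for completeness.
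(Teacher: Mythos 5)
The paper does not prove this lemma at all: it is quoted verbatim from Keller (\cite{Kel96}, Lem.\ 10.3) with no argument supplied, so there is no in-paper proof to compare against. Your sketch is the standard calculus-of-fractions argument for Verdier's criterion and is essentially correct: faithfulness via ``killed in $\cT/\cN$ iff factors through $\cN$'' plus the factorization hypothesis, and fullness by replacing the vertex of a roof with the homotopy pushout of $n[-1]\to x'$ along $n[-1]\to w$. The only blemishes are notational: the connecting map and the composite you feed into the hypothesis should involve $x'$ rather than $x$ (the triangle is $n[-1]\to x'\to x\to n$, and the relevant morphism from $\cN$ to $\cS$ is $n[-1]\to x'\xrightarrow{f} y$), and the vertex of the original triangle is $\operatorname{Cone}(g)[-1]$, not $\operatorname{Cone}(g)$; likewise the phrase ``$f$ extends over $x''\to x$'' should read that $f$ extends along $x'\to x''$, which follows by applying $\operatorname{Hom}(-,y)$ to the pushout triangle together with your identity $f\circ h=\beta\circ\alpha$. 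None of these affects the validity of the argument.
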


Let us denote $n$-th times iteration of $\Ch^{\#}$ for $\bE$ 
by $\Ch^{\#}_n(\bE)$ and 
$\cD^{\#}_n(\bE):=\cT(\Ch_n^{\#}(\bE))$ the {\it{$n$-th higher derived category}} of $\bE$. 
As the following corollary, 
we can consider the negative $K$-groups as obstruction group of idempotent completeness of the higher derived categories:

\begin{cor}
\label{cor:obst}
We assume that 
$\cE$ is complicial or $w$ is just the class of isomorphisms in $\cE$. 
For any positive integer $n$, we have  

\sn
$\mathrm{(i)}$ $\bK_{-n}(\bE) \simeq \bK_0(\cD_n(\bE))$. 

\sn
$\mathrm{(ii)}$ 
$\bK_{-n}(\bE)$ is trivial if and only if 
$\cD_n(\bE)$ is idempotent complete.
\end{cor}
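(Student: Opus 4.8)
\textbf{Proof proposal for Corollary \ref{cor:obst}.}
The plan is to deduce the corollary from Theorem \ref{partial GW} by iterating the delooping isomorphism $\mathrm{(iii)}$ and then identifying $\bK_0$ of a derived category with $K_0$ of its idempotent completion. First I would treat the case where $\cE$ is complicial: by Theorem \ref{partial GW}\,(iii) applied to $\bE$ we have $\bK_i(\bE) \isomto \bK_{i+1}(\Ch(\bE))$, and since $\Ch(\bE) = \Ch_1(\bE)$ is again complicial (it is a complicial exact category with weak equivalences by the discussion opening Section \ref{sec:deloop}), I can iterate: $\bK_i(\bE) \isomto \bK_{i+n}(\Ch_n(\bE))$ for every $n \ge 1$. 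Taking $i = -n$ gives $\bK_{-n}(\bE) \isomto \bK_0(\Ch_n(\bE))$.

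The second ingredient is the identification recalled just before Theorem \ref{partial GW}: for any complicial exact category with weak equivalences $\bF$ one has $\bK_0(\bF) = K_0(\cT(\bF)^{\sim})$, the $K_0$ of the idempotent completion of the associated triangulated category. Applying this with $\bF = \Ch_n(\bE)$ and recalling the definition $\cD_n(\bE) = \cT(\Ch_n(\bE))$ yields $\bK_0(\Ch_n(\bE)) = K_0(\cD_n(\bE)^{\sim})$, and also $K_0(\cD_n(\bE)^{\sim}) = \bK_0(\cD_n(\bE))$ where the latter is the non-connective $K_0$; combining with the previous paragraph gives $\mathrm{(i)}$. For $\mathrm{(ii)}$, I would use that an idempotent complete triangulated category $\T$ satisfies $\T^{\sim} = \T$, while in general the cofiber of $K_0(\T) \to K_0(\T^{\sim})$ detects exactly the failure of idempotent completeness — more precisely, since $\cD_n(\bE)$ is already idempotent complete precisely when $\cD_n(\bE) \to \cD_n(\bE)^{\sim}$ is an equivalence, and the only obstruction to $\bK_{-n}(\bE) = \bK_0(\cD_n(\bE)) = K_0(\cD_n(\bE)^{\sim})$ vanishing is this equivalence being an isomorphism on $K_0$; here one invokes that for $\cD_n(\bE)$, which arises as $\cT$ of a complicial category, $K_0(\cD_n(\bE))$ already vanishes (the delooping identity $\bK_{-n}(\bE) \isomto \bK_0(\Ch_n(\bE))$ combined with $\bK_0(\Ch_n(\bE)) = K_0(\cT(\Ch_n(\bE))^{\sim})$ shows $\bK_{-n}(\bE)$ measures only the idempotent-completion discrepancy), so $\bK_{-n}(\bE) = 0$ iff $\cD_n(\bE)^{\sim} = \cD_n(\bE)$, i.e. iff $\cD_n(\bE)$ is idempotent complete.

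For the alternative hypothesis — $w$ the class of isomorphisms in $\cE$, with $\cE$ merely idempotent complete — I would note that $\Ch(\bE) = (\Ch(\cE), qw)$ with $qw$ the quasi-isomorphisms is complicial by the remarks in Section \ref{sec:deloop} (indeed $\ChshE$ is a complicial exact category), so already $\Ch_1(\bE)$ falls under the complicial case and the same iteration applies with the index shift starting one step later; one must also invoke the Gillet–Waldhausen comparison to see $\bK(\bE) \isomto \bK(\Ch^b(\bE))$ and that the delooping Theorem \ref{partial GW}\,(iii) is available for $\Ch(\bE)$, hence for all $\Ch_n(\bE)$ with $n \ge 1$.

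The main obstacle I anticipate is the bookkeeping in step $\mathrm{(ii)}$: carefully justifying that $K_0$ of the non-idempotent-complete $\cD_n(\bE)$ is exactly the obstruction group — i.e. that $\bK_0$ of a complicial category is $K_0(\cT(\bF)^{\sim})$ while the plain $K_0(\cT(\bF))$ can differ, and that after the $n$-fold delooping the class we are computing is $K_0(\cD_n(\bE)^{\sim})$ rather than $K_0(\cD_n(\bE))$, so that its vanishing is equivalent to $\cD_n(\bE) \to \cD_n(\bE)^{\sim}$ being an equivalence on objects and hence (being already fully faithful and essentially surjective onto a dense subcategory) an equivalence. The routine verification that each $\Ch_n(\bE)$ remains complicial, and that Theorem \ref{partial GW} applies at each stage, I would state but not belabour.
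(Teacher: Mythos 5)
Your overall route coincides with the paper's: iterate Theorem \ref{partial GW}(iii) to obtain $\bK_{-n}(\bE)\simeq \bK_0(\Ch_n(\bE))=K_0(\cD_n(\bE)^{\sim})$, reduce the non-complicial case (where $w$ is the class of isomorphisms) to the complicial one by first passing to $(\Chb(\cE),\qis)$ via Gillet--Waldhausen, and for (ii) use that $K_0$ of the idempotent completion detects idempotent completeness through Thomason's classification of dense triangulated subcategories. Part (i) and the reduction step are fine as you state them.

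There is, however, a genuine gap in the ``if'' direction of (ii). When $\cD_n(\bE)$ is idempotent complete you get $\bK_{-n}(\bE)=K_0(\cD_n(\bE)^{\sim})=K_0(\cD_n(\bE))$, and to conclude that this vanishes you still need the separate fact that $K_0(\cD_n(\bE))=0$ for the \emph{uncompleted} derived category. Your parenthetical justification --- that the delooping identity ``shows $\bK_{-n}(\bE)$ measures only the idempotent-completion discrepancy'' --- is circular: the identity $\bK_{-n}(\bE)=K_0(\cD_n(\bE)^{\sim})$ says nothing by itself about $K_0(\cD_n(\bE))$, which for a general idempotent complete triangulated category is of course nonzero. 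The paper supplies the missing input in Proposition \ref{cor:vanish}(ii): the square (\ref{diag:derived cats}) of fully faithful inclusions, whose induced functors on quotients are equivalences, yields a surjection $K_0(\cD^{+}(\bE'))\oplus K_0(\cD^{-}(\bE'))\to K_0(\cD(\bE'))$, and both summands vanish by the Eilenberg swindle (Theorem \ref{partial GW}(ii) in degree $0$, combined with the injectivity of $K_0(\cT)\to K_0(\cT^{\sim})$ from \cite{Tho97}); hence $K_0(\cD(\bE'))=0$ for every complicial $\bE'$, in particular for $\bE'=\Ch_{n-1}(\bE)$. With that vanishing in hand your argument closes, and the ``only if'' direction goes through by Thomason's theorem exactly as you indicate, since $K_0(\cD_n(\bE)^{\sim})=0$ forces the dense subcategory $\cD_n(\bE)$ to be all of $\cD_n(\bE)^{\sim}$.
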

\begin{proof}
Suppose that $\bE = (\cE,w)$ is complicial. 
By Theorem \ref{partial GW} (iii), 
we have 
$\bK_{-n}(\bE)\simeq \bK_0(\Ch_n(\bE))\simeq \bK_0(\cD_n(\bE))$.
Then Proposition \ref{cor:vanish} below leads the desired assertion.  
If $\bE = \cE$ is an exact category; it may not be complical, but $w=$ 
the class of isomorphisms). 
From the Gillet-Waldhausen and Lemma 7 and Corollary 6 in \cite{Sch06} as already refered in Introduction, we have 
$\bK_{-n}(\cE) \isomto \bK_{-n}(\Chb(\cE),\qis) \isomto \bK_{-n+1}(\Ch(\cE),\qis)$. 
Hence, one reduce to the case of complicial.  
\end{proof}

\begin{prop}
  \label{cor:vanish} 
$\mathrm{(i)}$ For an essentially small triangulated category $\cT$, 
if $\bK_0(\cT)=K_0(\cT^{\sim})$ is trivial, 
then $\cT$ is idempotent complete.

\sn
$\mathrm{(ii)}$ 
  The derived category $\cD(\bE)$ is idempotent complete if and only if 
  the Grothendieck group $\bK_0(\cD(\bE))=K_0(\cD(\bE)^{\sim})$ is trivial. 
\end{prop}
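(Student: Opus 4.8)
The plan is to treat (i) as a general fact about idempotent completions of essentially small triangulated categories, and then to bootstrap (ii) out of (i) together with the vanishing statements of Theorem~\ref{partial GW}. The easy half of (ii), that $\bK_0(\cD(\bE))=0$ forces $\cD(\bE)$ to be idempotent complete, will be nothing but (i) applied to $\cT=\cD(\bE)$; the substance is the converse, where one must force the \emph{ordinary} Grothendieck group $K_0(\cD(\bE))$ --- not its idempotent-completed version --- to vanish. The only available input for that is the Eilenberg-swindle vanishing $\bK_0(\Ch^{\pm}(\bE))=0$ of Theorem~\ref{partial GW}(ii), which has to be transported into $\cD(\bE)$ along the square of fully faithful comparison functors~(\ref{diag:derived cats}); organizing this transport is the crux.

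For (i): by \cite{BS01} the idempotent completion $\cT^{\sim}$ is again triangulated, and $\cT\hookrightarrow\cT^{\sim}$ is dense, since every object $(x,e)$ of $\cT^{\sim}$ is a direct summand of $x\cong(x,e)\oplus(x,\id-e)\in\cT$. By Thomason's classification of dense triangulated subcategories, $\cT$ is recovered inside $\cT^{\sim}$ as the full subcategory of those $w$ whose class $[w]$ lies in the image $H$ of $K_0(\cT)\to K_0(\cT^{\sim})$. If $\bK_0(\cT)=K_0(\cT^{\sim})=0$, then $H=0=K_0(\cT^{\sim})$, the membership condition is vacuous, and $\cT=\cT^{\sim}$; that is, $\cT$ is idempotent complete.

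For (ii), the implication ``$\Leftarrow$'' is (i) with $\cT=\cD(\bE)$. For ``$\Rightarrow$'', I would first record that $K_0(\cD^{+}(\bE))=K_0(\cD^{-}(\bE))=0$: by Theorem~\ref{partial GW}(ii) one has $\bK_0(\Ch^{\pm}(\bE))=0$, hence $K_0(\cD^{\pm}(\bE)^{\sim})=0$, and then (i) shows $\cD^{\pm}(\bE)$ is already idempotent complete, whence $K_0(\cD^{\pm}(\bE))=K_0(\cD^{\pm}(\bE)^{\sim})=0$. Next, for an arbitrary $x\in\Ch(\cE)$ the brutal truncation gives a conflation $\sigma^{\ge 0}x\mono x\epi\sigma^{<0}x$ in $\Ch(\cE)$, in which $\sigma^{\ge 0}x$ is bounded below and $\sigma^{<0}x$ is bounded above; passing to $\cD(\bE)$ (where the canonical projection sends conflations to distinguished triangles) and using the fully faithful functors of~(\ref{diag:derived cats}) to view the two outer terms as coming from $\cD^{+}(\bE)$ and $\cD^{-}(\bE)$, we get $[x]=[\sigma^{\ge 0}x]+[\sigma^{<0}x]$ inside the sum of the images of $K_0(\cD^{+}(\bE))$ and $K_0(\cD^{-}(\bE))$ in $K_0(\cD(\bE))$, which is $0$. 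Since $K_0(\cD(\bE))$ is generated by classes of objects, $K_0(\cD(\bE))=0$. Hence, if $\cD(\bE)$ is idempotent complete, $\bK_0(\cD(\bE))=K_0(\cD(\bE)^{\sim})=K_0(\cD(\bE))=0$, completing the converse. The routine checks I would still have to make are that the brutal truncation really is a conflation for the degreewise exact structure on $\Ch(\cE)$, and that its outer terms are genuinely images of objects of $\cD^{+}(\bE)$ and $\cD^{-}(\bE)$ under the functors of~(\ref{diag:derived cats}), so that the displayed identity in $K_0(\cD(\bE))$ is legitimate.
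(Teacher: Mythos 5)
Your proof is correct and follows essentially the same route as the paper: Thomason's classification of dense triangulated subcategories for (i), and for (ii) the surjection of $K_0(\cD^{+}(\bE))\oplus K_0(\cD^{-}(\bE))$ onto $K_0(\cD(\bE))$ coming from the square (\ref{diag:derived cats}). You merely spell out two steps the paper leaves implicit --- deducing $K_0(\cD^{\pm}(\bE))=0$ from $\bK_0(\Ch^{\pm}(\bE))=0$ via (i), and justifying the surjectivity by brutal truncation --- which is a welcome amount of extra care.
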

\begin{proof}
(i) Since the map $K_0(\cT) \to K_0(\cT^{\sim})$ is injective 
by \cite{Tho97} Corollary\ 2.3, 
now $K_0(\cT)$ is also trivial. 
Applying the Thomason classification theorem of 
  (strictly) dense triangulated subcategories in 
  essentially small triangulated categories \cite{Tho97} Theorem\ 2.1 
  for $\cT^{\sim}$, 
  the inclusion functor $\cT \to \cT^{\sim}$ 
  must be an equivalence.  

\sn
(ii)
  From the diagram (\ref{diag:derived cats}) 
  in the proof of Theorem \ref{partial GW}, 
  we have a surjection 
  $0= K_0(\cD^{+}(\bE)) \oplus K_0(\cD^-(\bE)) \to K_0(\cD(\bE))$. 
  Therefore $K_0(\cD(\bE))=0$. 
  If $\cD(\bE)$ is idempotent complete, that is, 
  $\cD(\bE)\isomto\cD(\bE)^{\sim}$, then we have 
  $\bK_0(\cD(\bE))=K_0(\cD(\bE)^{\sim})=K_0(\cD(\bE))=0$. 
  The converse is followed from (i).
\end{proof}

\providecommand{\bysame}{\leavevmode\hbox to3em{\hrulefill}\thinspace}
\providecommand{\href}[2]{#2}

\noindent
Toshiro Hiranouchi \\
Email address: {\tt hira@hiroshima-u.ac.jp}

\vspace{0.5cm}

\noindent
 Satoshi Mochizuki \\
 {\tt mochi81@hotmail.com}
\end{document}